\theoremstyle{plain}
\newtheorem{theorem}{Theorem}[section]
\newtheorem{corollary}[theorem]{Corollary}
\newtheorem{lemma}[theorem]{Lemma}
\theoremstyle{definition}
\newtheorem{definition}[theorem]{Definition}
\newtheorem{question}[theorem]{Question}
\newtheorem{remark}[theorem]{Remark}
\newtheorem{example}[theorem]{Example}
\newcommand{\GId}{I\!\!I}
\newcommand{\bV}{{\mathbb{V}}}
\newcommand{\GA}{{\mathbb{A}}}
\newcommand{\GP}{{\mathbb{P}}}
\newcommand{\GN}{{\mathbb{N}}}
\newcommand{\GR}{{\mathbb{R}}}
\newcommand{\GD}{\mathds{D}}
\newcommand{\GV}{\mathds{V}}
\newcommand{\cA}{{\mathcal A}}
\newcommand{\cL}{{\mathcal L}}
\newcommand{\cF}{{\mathcal F}}
\newcommand{\cX}{{\mathcal X}}
\newcommand{\cC}{{\mathcal C}}
\newcommand{\bis}{\prime\prime}
\newcommand{\conver}{\mathop{\longrightarrow}}
\newcommand{\intau}{\ \ \conver_{\tau}\ }
\newcommand{\ines}{\ \ \conver_{S}\ }
\newcommand{\starines}{\stackrel{*}{\ines}}
\newcommand{\intopol}[1]{\ \ \conver_{#1}\ }
\newcommand{\lr}{\longrightarrow}
\title{New characterizations\\[2mm] of the $S$ topology on the Skorokhod space}
\author{Adam Jakubowski\thanks{Supported
    by the Polish NCN grant no. 2012/07/B/ST1/03508}
\\Nicolaus Copernicus University\\
adjakubo@mat.uni.torun.pl}
\begin{document}

\maketitle

\begin{abstract}
The $S$ topology on the Skorokhod space was introduced by the author in 1997 and since then it has proved to be a useful tool in several areas of the theory of stochastic processes. The paper brings complementary information on the $S$ topology. It is shown that the convergence of sequences in the $S$ topology admits a closed form description, exhibiting the locally convex character of the $S$ topology. Morover, it is proved that the $S$ topology is, up to some technicalities, finer than any linear topology which is coarser than Skorokhod's $J_1$ topology. The paper contains also definitions of extensions of the $S$ topology to the Skorokhod space of functions defined on $[0,+\infty)$ and with multidimensional values. 
\\[2mm]
{\bf Keywords:} functional convergence of stochastic processes; $S$ topology; $J_1$ topology; Skorokhod space; sequential spaces\\[1mm]
{\bf AMS MSC 2000:} 60F17; 60B11; 54D55; 54A10
\end{abstract}

\section{Introduction}
The $S$ topology on the Skorokhod space $\GD = \GD([0,T])$ of c\`adl\`ag functions has emerged as a result of chain of observations made in eighties and nineties of the twentieth century. 

In 1984 Meyer and Zheng \cite{MeZh84} considered certain conditions on truncated variations of stochastic processes and proved that these conditions give uniform tightness of the processes in some topology   
(nowadays called the Meyer-Zheng topology) on $\GD$.

A year later Stricker \cite{Str85}  proved that it is possible to relax Meyer-Zheng conditions to uniform tightness of random variables $\{ \|X_n\|_{\infty}\}$ and $\{N^{a,b}(X_n)\}$, for each pair of levels $ a < b$ (where $N^{a,b}$ is the number of up-crossings of levels $a < b$). 

Stricker still operated with the Meyer-Zheng topology.
 It was clear for Kurtz \cite{Kur91} that such conditions give much more. But an {\em ad hoc} device constructed by Kurtz \emph{did not have a topological character}.

The $S$ topology was constructed by the author in \cite{Jak97}. This step was final in the sense that Stricker's conditions are \emph{equivalent} to the uniform tightness in the  $S$ topology.

It should be emphasized, that the $S$ topology, as considered in \cite{Jak97}, is sequential and it is still not known if it is completely regular. So at the moment
of  its creation there was no formalism to deal with it within the Probability Theory. Such formalism has been provided in \cite{Jak2000}. And an efficient tool - the almost sure Skorokhod's representation in non-metric spaces, given in \cite{Jak97asr} - made the $S$ topology an operational and useful device in many problems.

The very first application of the $S$ topology was given by the author in \cite{Jak96}, where convergence of stochastic integrals was considered.
Later the $S$ topology was used in problems related to homogenization of  stochastic differential equations (e.g.  \cite{BEP09}, \cite{BEP17},  \cite{BMPa07}, \cite{Lejay02},  \cite{OuPa02},  \cite{SRP09}, \cite{Rho10}), 
diffusion approximation of solutions to the Poisson equation (\cite{PaVe05}),
stability of solutions to semilinear equations with Dirichlet operator (\cite{Klim17}), martingale transport on the Skorokhod space (\cite{GTT17}),
the Skorokhod problem (\cite{LaSl03}, \cite{LaSl13},  \cite{MRS17}, \cite{Slo15}), econometrics (\cite{ChZh10}),
control theory (\cite{BGM11}, \cite{KuSt01}), linear models with heavy-tails (\cite{BJL2016}), continuity of semilinear Neumann-Dirichlet problems (\cite{MaRa16}), generalized Doob-Meyer decomposition (\cite{Jak06}), 
modeling stochastic reaction networks (\cite{KaKu13})
and even in some considerations of more general character (\cite{CoKu15}, \cite{Ku14}).

In the present paper we provide some complementary information related to the $S$ topology. Section 2 restates the definition and basic properties of the $S$ topology. The $S$ topology was defined in \cite{Jak97} by means of so-called $\cL$-convergence $\ines$, which leads to the  topological convergence $\starines$ via the Kantorovich-Vulih-Pinsker-Kisynski recipe (we refer to Section \ref{appendix} for a primer on sequential spaces). 

In Section \ref{seccompact} we provide a closed form formula for the  $\starines$ convergence.

In Section \ref{sechier} we find a position for $S$ in the hierarchy of topologies on $\GD$, by showing that it is \emph{essentially} finer than any linear topology which is coarser than the Skorokhod's $J_1$ topology.

In Section \ref{extensions} we extend the notion of the $S$ topology to the case of infinite time horizon and to functions with multidimensional values.

\section{Definition of the $S$ topology}

All results in this section are taken from \cite{Jak97}.
We need some standard notation first.
\begin{enumerate}
\item $\GD= \GD([0,T])$ denotes the Skorokhod space, i.e. a family of functions  $x\,:\,[0,T] \to  \GR^1$, which are {right-continuous} at every $t\in [0,T)$ and admit {left-limits} at every $t \in (0,T]$.
\item $\GD$ is naturally equipped with the \emph{sup-norm} $ \|x\|_{\infty} = \sup_{t\in [0,T]} |x(t)|$.
\item For $a < b$, $N^{a,b}(x)$ is \emph{the number of up-crossings of levels $a$ and $b$ by function $x\in \GD$}. In other words, $N^{a,b}(x)$ is the largest integer $k$ such that there are numbers  $0 \leq t_1 < t_2 <t_3 < \ldots < t_{2k-1} < t_{2k} \leq T$ satisfying $x(t_{2i - 1}) < a,\ x(t_{2i}) > b$, for each $i=1,2,\ldots, k$.
\item For $\eta > 0$, $N_{\eta}(x)$ is \emph{the number of $\eta$-oscillations} of $x\in \GD$ on $[0,T]$. This means that $N_{\eta}(x)$ is the largest ineger $k$ such that there are numbers $0 \leq t_1 < t_2 \leq t_3 <  \ldots \leq  t_{2k-1} < t_{2k} \leq T$ satisfying $ \big|x(t_{2i}) - x(t_{2i - 1})\big| > \eta$, for each $i=1,2,\ldots, k$.
\item $\|v\|(T)$ is \emph{the total variation of $v$ on $[0,T]$}:
\[
\|v\|(T)
 = \sup \big\{|v(0)| + \sum_{i=1}^{m} |v(t_i) - v(t_{i-1})|\big\}, \]
where the supremum is taken over all 
$0=t_0 < t_1 < \ldots < t_m = T,\ m\in\GN$.
\item $\bV = \big\{ x \in \GD\,;\, \|x\|(T) < +\infty\big\}$.
\end{enumerate}

The $S$ topology is defined in terms of $S$-convergence. 
\begin{definition}[$S$-convergence]
We shall write $x_n \ines x_0$
if for every $\varepsilon > 0$ one can
find elements $v_{n,\varepsilon}\in \GV$, $n=0,1,2,\ldots $ which are
$\varepsilon$-uniformly close to $x_n$'s and weakly-$*$ convergent:
\begin{eqnarray}
\|x_n - v_{n,\varepsilon}\|_{\infty} \leq \varepsilon,&&\ n = 0, 1, 2,
\ldots, \label{vclose}\\
v_{n,\varepsilon} \Rightarrow v_{0,\varepsilon},&& \text{as $n\to\infty$}.\label{vconv}
\end{eqnarray}
\end{definition}
\begin{remark}
Recall, that $v_{n,\varepsilon} \Rightarrow v_{0,\varepsilon}$ means that 
\begin{equation}\label{evconvclear}
 \int_{[0,T]} f(t) \,dv_{n,\varepsilon}(t) \to \int_{[0,T]} f(t)\,dv_{0,\varepsilon}(t),
\end{equation}
for each continuous function $f : [0,T] \to \GR^1$. In particular, setting $f(t) \equiv 1$ we get
\begin{equation}\label{vone}
v_{n,\varepsilon}(T) \to v_{0,\varepsilon}(T).
\end{equation}
Moreover, by the Banach-Steinhaus theorem, relation (\ref{vconv}) implies
\begin{equation}\label{vbounded}
\sup_n \|v_{n,\varepsilon}\|(T) < +\infty.
\end{equation}
\end{remark}
For the sake of brevity of formulation of the next theorem let us list some conditions describing properties of a subset $K\subset \GD$.
\begin{align}
\label{2e1}
\sup_{x\in K}
\|x\|_{\infty} &< +\infty. \\
\label{2e2}
\sup_{x\in K} N^{a,b}(x) &< +\infty,\quad \text{for all $a < b$}.\\
\label{2e3}
\sup_{x\in K} N_{\eta}(x) &< +\infty, \quad \text{for every $\eta > 0$}.
\end{align}

\begin{theorem} (Criterion of relative $S$-compactness) \label{T:criteria}
Let $K \subset \GD$.
We can find in every sequence $\{x_n\}$ of elements of $K$
 a subsequence $\{x_{n_k}\}$
such that $x_{n_k} \ines x_0$, as $k\to \infty$, if, and only if, 
one of the following equivalent statements is satisfied.
\begin{description}
\item{\bf (i)} Conditions  \ref{2e1} and \ref{2e2} hold.
\item{\bf (ii)} Conditions  \ref{2e1} and \ref{2e3} hold.
\item{\bf (iii)} For every $\varepsilon > 0$ and every $x\in
K$ there exists
$v_{x,\varepsilon} \in \bV$ such that
\begin{equation}\label{2e4}
\sup_{x\in K} \|x - v_{x,\varepsilon}\|_{\infty} \leq \varepsilon,
\end{equation}
and
\begin{equation}\label{2e5}
\sup_{x\in K} \|v_{x,\varepsilon}\| < +\infty.
\end{equation}
\end{description}
\end{theorem}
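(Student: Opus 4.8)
The plan is to prove the cycle of implications (iii) $\Rightarrow$ (i) $\Rightarrow$ (ii) $\Rightarrow$ (iii), together with the equivalence of these with the stated subsequential compactness property; the most natural route is to show that relative $S$-compactness implies (i), that (i) $\Leftrightarrow$ (ii), that (ii) $\Rightarrow$ (iii), and that (iii) $\Rightarrow$ relative $S$-compactness. I would begin with the easy combinatorial equivalence (i) $\Leftrightarrow$ (ii): up-crossings of fixed levels $a<b$ and $\eta$-oscillations control each other once $\|x\|_\infty$ is bounded. Indeed, if $\sup_K\|x\|_\infty\le M$ and $\sup_K N^{a,b}(x)<\infty$ for all rational $a<b$, then covering $[-M,M]$ by finitely many intervals of length $<\eta/2$ one sees that a single $\eta$-oscillation forces an up-crossing of one of boundedly many level pairs, so $\sup_K N_\eta(x)<\infty$; conversely any $N^{a,b}$ up-crossing is in particular a $(b-a)$-oscillation, giving $N^{a,b}(x)\le N_{b-a}(x)$.

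The substantive analytic step is (ii) $\Rightarrow$ (iii): from a uniform bound $\sup_{x\in K}\|x\|_\infty\le M$ and $\sup_{x\in K}N_\eta(x)\le C_\eta<\infty$ I must construct, for each $\varepsilon>0$ and each $x\in K$, a function $v_{x,\varepsilon}\in\bV$ with $\|x-v_{x,\varepsilon}\|_\infty\le\varepsilon$ and $\|v_{x,\varepsilon}\|(T)$ bounded uniformly over $K$. The idea is to define $v_{x,\varepsilon}$ as a suitable piecewise-constant (or piecewise-monotone) approximation obtained by a greedy ``$\varepsilon$-threshold'' stopping procedure: set $\tau_0=0$, $v(0)=x(0)$, and let $\tau_{k+1}$ be the first time after $\tau_k$ at which $x$ has moved by more than, say, $\varepsilon/2$ from its value at $\tau_k$, freezing $v$ at $x(\tau_k)$ on $[\tau_k,\tau_{k+1})$ and jumping to $x(\tau_{k+1})$ at $\tau_{k+1}$; c\`adl\`ag regularity guarantees the $\tau_k$ do not accumulate before $T$, and the number of steps is controlled by $N_{\varepsilon/2}(x)\le C_{\varepsilon/2}$. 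Then $\|x-v_{x,\varepsilon}\|_\infty\le\varepsilon/2\le\varepsilon$ by construction, $v_{x,\varepsilon}$ has finitely many jumps hence lies in $\bV$, and its total variation is at most $|x(0)|+\sum|x(\tau_{k+1})-x(\tau_k)|\le M + 2M\,C_{\varepsilon/2}$, which is uniform in $x\in K$. This is the step I expect to require the most care — in particular verifying that the stopping times are well defined and finite in number using only the c\`adl\`ag property, and handling the final fragment near $T$.

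For (iii) $\Rightarrow$ relative $S$-compactness, take a sequence $\{x_n\}\subset K$. For each $m\ge1$ apply (iii) with $\varepsilon=1/m$ to get $v_{n,1/m}\in\bV$ with $\|x_n-v_{n,1/m}\|_\infty\le1/m$ and $\sup_n\|v_{n,1/m}\|(T)\le B_m<\infty$. For fixed $m$, the signed measures $dv_{n,1/m}$ on $[0,T]$ have uniformly bounded total variation, hence by weak-$*$ sequential compactness of bounded sets in $C([0,T])^*$ (Banach--Alaoglu plus separability of $C([0,T])$) some subsequence converges weakly-$*$; a diagonal argument over $m$ produces a single subsequence $\{x_{n_k}\}$ along which $v_{n_k,1/m}\Rightarrow$ some limit $w_m$ for every $m$. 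One then checks the limits $w_m$ can be taken to be (left-continuous primitives of) elements of $\bV$ — using that the weak-$*$ limit of a bounded-variation sequence is again of bounded variation with $\|w_m\|(T)\le B_m$ — and defines $x_0$ via these, verifying $\|x_0-w_m\|_\infty\le 1/m$ so that $x_0$ is indeed the $S$-limit. Conversely, that relative $S$-compactness implies (i) is the reverse direction: if every subsequence has an $S$-convergent sub-subsequence, then $\sup_K\|x\|_\infty<\infty$ (otherwise pick $x_n$ with $\|x_n\|_\infty\to\infty$; any $S$-limit would force, via \eqref{vclose}--\eqref{vbounded} applied with a fixed $\varepsilon$, a contradiction with the uniform bound on $\|v_{n,\varepsilon}\|(T)\ge\|v_{n,\varepsilon}\|_\infty\ge\|x_n\|_\infty-\varepsilon$), and similarly an unbounded sequence of up-crossing numbers $N^{a,b}(x_n)\to\infty$ would survive $\varepsilon$-perturbation (for $\varepsilon<(b-a)/2$ an up-crossing of $[a,b]$ by $x_n$ forces an up-crossing of $[a+\varepsilon,b-\varepsilon]$ by $v_{n,\varepsilon}$) and contradict the uniform total-variation bound, since $N^{a+\varepsilon,b-\varepsilon}(v)\le\|v\|(T)/(b-a-2\varepsilon)$. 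Assembling these implications closes the cycle and yields all the stated equivalences.
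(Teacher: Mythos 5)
Your proposal is sound: the cycle (relative $S$-compactness $\Rightarrow$ (i) $\Leftrightarrow$ (ii) $\Rightarrow$ (iii) $\Rightarrow$ relative $S$-compactness), built on the $\varepsilon$-threshold stopping-time approximation, Helly/Banach--Alaoglu selection with a diagonal argument, identification of the limit on a co-countable dense set, and the inequalities $\|v\|(T)\geq\|v\|_{\infty}$ and $N^{c,d}(v)\leq\|v\|(T)/(d-c)$, is exactly the machinery of the original construction, which the present paper does not reprove but imports from \cite{Jak97} (and partly reuses in the proof of Theorem \ref{finest}). The only blemishes are cosmetic: with threshold $\varepsilon/2$ the number of stopping times is bounded by $N_{\varepsilon/4}$ rather than $N_{\varepsilon/2}$, and the verification that $\|w_m-w_{m'}\|_{\infty}\leq 1/m+1/m'$ survives the weak-$*$ limit (pointwise convergence off a countable set plus right-continuity) deserves to be spelled out, but neither affects correctness.
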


\begin{definition}[The $S$ topology]
The $S$ topology is the sequential topology on $\GD$ generated by the $\cL$-convergence $\ines$.
\end{definition}

\begin{remark}
``$\ines$'' is not an $\cL^*$-convergence. In the next section we shall give a closed form of ``$\starines$", obtained from $\ines$ by the KVPK recipe (Theorem \ref{kvpk_recipe}).
\end{remark}

\section{Closed form definition of  $\starines$}\label{seccompact}

\begin{definition}\label{gat}
 Let $\GA = \GA\big([0,T]\big)$ be a family of \emph{continuous} functions of \emph{finite variation} ($\GA \subset C([0,T]) \cap \GV$), satisfying {$A(0) = 0$}.
 
Let $A_n \in \GA, n=0,1,2,\ldots$. We will say that $A_n\intopol{\tau} A_0$, if
\begin{equation}
 \sup_{t\in[0,T]} |A_n(t) - A_0(t)| \to 0,\label{aconv}
\end{equation}
and 
\begin{equation} 
\sup_n \|A_n\|(T) < +\infty.\label{asup}
\end{equation}
\end{definition}
\begin{remark}
 This is a {``mixed topology''} on  $C([0,T]) \cap \GV$ .
\end{remark}

\begin{theorem}\label{thmain}
$x_n \starines x_0$ if, and only if, $x_n(T) \to x_0(T)$ and
\begin{equation}\label{eq:int}
 \int_0^T x_n(t)\, dA_n(t) \to \int_0^T x_0(t)\, dA_0(t),
\end{equation}
for each sequence $\{A_n\} \subset \GA$ that satisfies $A_n \intopol{\tau} A_0$.
\end{theorem}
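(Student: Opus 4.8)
The plan is to relate the abstract $\starines$ convergence, obtained from $\ines$ via the KVPK recipe, to the concrete ``integral test'' against $\tau$-convergent sequences of continuous finite-variation functions. Recall that the KVPK passage from an $\cL$-convergence to an $\cL^*$-convergence is: $x_n \starines x_0$ iff every subsequence of $\{x_n\}$ contains a further subsequence that $\ines$-converges to $x_0$. So the proof naturally splits into two implications, and the reader should keep in mind that the right-hand side of the asserted equivalence (namely $x_n(T)\to x_0(T)$ together with \eqref{eq:int} for all $A_n\intopol{\tau} A_0$) is already a ``subsequence-stable'' property — it passes to subsequences trivially, and conversely if it holds along a subsequence of every subsequence then it holds. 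This subsequence-stability is the bridge: it suffices to prove that, for a fixed sequence, $\ines$-convergence is \emph{equivalent} to the integral test, and then the KVPK machinery upgrades this to the statement about $\starines$.

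First I would prove the easy direction: if $x_n \ines x_0$, then $x_n(T)\to x_0(T)$ and \eqref{eq:int} holds for every $A_n \intopol{\tau} A_0$. Fix $\varepsilon>0$ and choose $v_{n,\varepsilon}\in\GV$ as in the definition of $S$-convergence, so $\|x_n - v_{n,\varepsilon}\|_\infty \le \varepsilon$ and $v_{n,\varepsilon}\Rightarrow v_{0,\varepsilon}$, with $\sup_n\|v_{n,\varepsilon}\|(T)<\infty$ by Banach–Steinhaus. Evaluating at $T$ and using \eqref{vone}-type reasoning (note $v_{n,\varepsilon}(T)\to v_{0,\varepsilon}(T)$ and $|x_n(T)-v_{n,\varepsilon}(T)|\le\varepsilon$) gives $\limsup_n |x_n(T)-x_0(T)| \le 2\varepsilon$, hence $x_n(T)\to x_0(T)$. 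For the integral, write $\int_0^T x_n\,dA_n = \int_0^T v_{n,\varepsilon}\,dA_n + \int_0^T (x_n - v_{n,\varepsilon})\,dA_n$; the error term is bounded by $\varepsilon\sup_n\|A_n\|(T)$, uniformly in $n$. The main term $\int_0^T v_{n,\varepsilon}\,dA_n$ should converge to $\int_0^T v_{0,\varepsilon}\,dA_0$: here one performs integration by parts to move the differential onto $A_n$ (legitimate since $A_n$ is continuous), namely $\int_0^T v_{n,\varepsilon}\,dA_n = v_{n,\varepsilon}(T)A_n(T) - \int_0^T A_n\,dv_{n,\varepsilon}$ (boundary term at $0$ vanishes since $A_n(0)=0$), and then $A_n \to A_0$ uniformly combined with $v_{n,\varepsilon}\Rightarrow v_{0,\varepsilon}$ and $\sup_n\|v_{n,\varepsilon}\|(T)<\infty$ yields convergence of $\int_0^T A_n\,dv_{n,\varepsilon}$ to $\int_0^T A_0\,dv_{0,\varepsilon}$ (a standard ``uniform convergence of integrands against uniformly-bounded-variation integrators'' argument). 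Undoing the integration by parts gives $\int_0^T v_{n,\varepsilon}\,dA_n \to \int_0^T v_{0,\varepsilon}\,dA_0$. Finally let $\varepsilon\to 0$: $\int_0^T v_{0,\varepsilon}\,dA_0 \to \int_0^T x_0\,dA_0$ since $\|x_0 - v_{0,\varepsilon}\|_\infty\le\varepsilon$, and a diagonal/$\limsup$ argument over $\varepsilon$ closes the case.

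The harder direction is the converse: assuming $x_n(T)\to x_0(T)$ and \eqref{eq:int} for all $A_n\intopol{\tau} A_0$, deduce $x_n\starines x_0$. Via KVPK it is enough to extract, from any subsequence, a further subsequence that $\ines$-converges to $x_0$. The natural strategy is to first show the hypothesis forces the relative-$S$-compactness conditions of Theorem~\ref{T:criteria} — concretely, that $\sup_n\|x_n\|_\infty<\infty$ and $\sup_n N^{a,b}(x_n)<\infty$ for all $a<b$ (condition (i)). If $\{x_n\}$ violated boundedness or had exploding up-crossing numbers, one would construct, by hand, a sequence $A_n\intopol{\tau}A_0$ (e.g. suitable continuous approximations of indicators of the up-crossing times, or a fixed smooth bump localized where $\|x_n\|_\infty$ blows up) against which the integrals \eqref{eq:int} fail to converge — this is the contrapositive and is where the real work lies, since one must build these test functions carefully so that they remain in $\GA$, have uniformly bounded variation, and converge uniformly to an admissible limit $A_0$ while detecting the bad behaviour of $x_n$. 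Granting relative $S$-compactness, Theorem~\ref{T:criteria} guarantees that every subsequence has a further subsequence $\ines$-converging to \emph{some} limit $y\in\GD$; by the easy direction already proved, that limit must satisfy $y(T)=\lim x_{n_k}(T)=x_0(T)$ and $\int_0^T y\,dA_0 = \lim\int_0^T x_{n_k}\,dA_n = \int_0^T x_0\,dA_0$ for every $A_0$ arising as a $\tau$-limit. Since the $\tau$-limits $A_0$ range over all of $\GA$ (take $A_n\equiv A_0$), and functions in $\GD$ are determined by their value at $T$ together with all integrals against $C([0,T])\cap\GV$ functions with $A(0)=0$ (integration by parts reduces this to separating measures $dy$ by continuous test functions), we get $y=x_0$. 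Hence every subsequence has a further subsequence $\ines$-converging to $x_0$, which is exactly $x_n\starines x_0$.

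The main obstacle is the contrapositive construction in the converse direction: turning a quantitative failure of the compactness conditions \eqref{2e1}–\eqref{2e2} into an explicit $\tau$-convergent sequence $\{A_n\}\subset\GA$ that breaks \eqref{eq:int}. The up-crossing case is the delicate one — one needs test integrators whose total variation stays bounded yet which ``resonate'' with an unbounded number of oscillations; the standard device is to build $A_n$ as a mollified sum of signed bumps placed at the up-crossing times $t_1^{(n)}<\cdots<t_{2k_n}^{(n)}$ with carefully chosen, summable amplitudes, arranged so that $\int_0^T x_n\,dA_n$ stays bounded away from the putative limit while $A_n$ converges uniformly (and with bounded variation) to a genuine element of $\GA$. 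Getting the bookkeeping right — uniform variation bound, uniform convergence, membership in $\GA$, and the separation of $x_n$ from $x_0$ — is the technical heart of the proof; everything else is standard integration-by-parts and the KVPK subsequence principle.
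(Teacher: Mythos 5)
Your overall architecture matches the paper's proof: the forward direction by integration by parts against the $v_{n,\varepsilon}$ (error term controlled by $\varepsilon\sup_n\|A_n\|(T)$, then $I_1+I_2+I_3$ handled by uniform convergence of $A_n$ and weak-$*$ convergence of $v_{n,\varepsilon}$ with Banach--Steinhaus bounding the variations), and the converse by forcing conditions \eqref{2e1}--\eqref{2e2} of Theorem~\ref{T:criteria} through contrapositive constructions of test integrators, then identifying every subsequential $\ines$-limit with $x_0$ via absolutely continuous integrators $A_f(t)=\int_0^t f$ plus the assumed convergence at $T$. The up-crossing construction you sketch is exactly the paper's Lemma~\ref{Lemmaup}: signed bumps of mass $\pm 1/(N-1)$ at the crossing times, giving $\|A\|(T)=2$, $\|A\|_\infty=1/(N-1)\to 0$, and $\int_0^T x\,dA\ge b-a$.

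One concrete step in your plan would fail as stated: for the unboundedness case \eqref{2e1} you suggest ``a fixed smooth bump localized where $\|x_n\|_\infty$ blows up.'' A fixed integrator cannot detect sup-norm blowup: for $x_n=n\,\GId_{[0,1/n^2)}$ one has $\|x_n\|_\infty\to\infty$ yet $\int_0^T x_n\,dA\to 0$ for every fixed $A\in\GA$. The integrator must depend on $n$ and be rescaled against the blowup. The paper uses right-continuity to find $[t_k,t_k+h_k]$ on which $x_{n_k}\ge a_k/2$ with $a_k=x_{n_k}(t_k)\to\infty$, and takes the density $f_k=\big(1/(\sqrt{a_k}\,h_k)\big)\GId_{[t_k,t_k+h_k]}$, so that $\|A_{f_k}\|(T)=1/\sqrt{a_k}\to 0$ (hence $A_{f_k}\intau 0$) while $\int_0^T x_{n_k}\,dA_{f_k}\ge \sqrt{a_k}/2\to\infty$; the square-root normalization is the point. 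With that replacement, and with the explicit amplitudes in the up-crossing lemma filled in, your plan coincides with the paper's proof.
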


\begin{proof}
Suppose that $x_n \starines x_0$. Then $x_n(T) \to x_0(T)$ is a consequence of (\ref{vone}) and property (\ref{vclose}) of functions $\{v_{n,\varepsilon}\}$. 
In order to prove (\ref{eq:int}) one could observe that this convergence is a very particular (deterministic) case of Theorem 1
(or Theorem 5) in \cite{Jak96}. But it is more instructive to give here a direct proof.

So assume that $A_n \intopol{\tau} A_0$, choose $\varepsilon > 0$ and let $\{v_{n,\varepsilon}\}_{n=0,1,2,\ldots} \subset \GV$ satisfy (\ref{vclose}) and (\ref{vconv}). For $n = 0,1,2,\ldots$ we have
\[  \big| \int_0^T x_n(t)\, dA_n(t) - \int_{[0,T]} v_{n,\varepsilon}(t)\, dA_n(t) \big| \leq  \varepsilon \sup_n \| A_n\|(T),
\]
and therefore it is enough to show that  
\begin{equation}\label{enough}
 \int_{[0,T]} v_{n,\varepsilon}(t)\, dA_n(t)\to  \int_{[0,T]} v_{0,\varepsilon}(t)\, dA_0(t).
\end{equation}
By the integration by parts formula, the continuity of $A_n$ and $A_n(0) = 0$ we obtain that for $n = 0, 1, 2, \ldots $
\begin{align*} \int_{[0,T]} v_{n,\varepsilon}(t)\, dA_n(t) &= v_{n,\varepsilon}(T) A_n(T) - 
\int_{[0,T]} A_n(t) \, dv_{n,\varepsilon}(t)\\
&= v_{n,\varepsilon}(T) A_n(T) - 
\int_{[0,T]} A_0(t) \, dv_{n,\varepsilon}(t) \\
&\qquad\qquad + \int_{[0,T]} \big(A_0(t) - A_n(t)\big) \, dv_{n,\varepsilon}(t) \\
& = I_1(n) + I_2(n) + I_3(n).
\end{align*}
By (\ref{vone}) and (\ref{aconv}) $I_1(n) =  v_{n,\varepsilon}(T) A_n(T) \to
v_{0,\varepsilon}(T) A_0(T) = I_1(0)$. By (\ref{evconvclear}) $I_2(n) = \int_{[0,T]} A_0(t) \, dv_{n,\varepsilon}(t) \to \int_{[0,T]} A_0(t) \, dv_{0,\varepsilon}(t) = I_2(0)$. Finally 
\[ |I_3(n)| =  \big|\int_{[0,T]} \big(A_0(t) - A_n(t)\big) \, dv_{n,\varepsilon}(t)\big| \leq \sup_{t\in [0,T]} \big| A_0(t) - A_n(t)\big| \sup_n \| v_{n,\varepsilon}\|(T) \to 0\]
by (\ref{aconv}) and (\ref{vbounded}). Hence (\ref{enough}) holds.

Now let us assume that $x_n(T) \to x_0(T)$ and (\ref{eq:int}) holds for every sequence $\{A_n\} \subset \GA$, $A_n \intopol{\tau} A_0$. We claim that it is enough to establish relative $S$-compactness of $\{x_n\}$. Indeed, then  in every subsequence $\{n'\}$ we can find a further subsequence $\{n^{\prime\prime}\}$ such that $x_{n^{\prime\prime}} \ines y_0$, for some $y_0 \in \GD$. Take a function $f \in L^1([0,T])$ and define $A_f(t) = \int_0^t f(u)\,du$. Then $A_f \in \GA$ and by the first part of the proof, 
\[ \int_0^T x_{n^{\prime\prime}}(u)\, dA_f(u) \to \int_0^T y_0(u)\, dA_f(u) = \int_0^T x_0(u)\, dA_f(u).\] 
Hence for each integrable $f$ we have 
\[ \int_0^T y_0(u) f(u)\,du =\int_0^T y_0(u)\, dA_f(u)  = \int_0^T x_0(u)\, dA_f(u) = \int_0^T x_0(u) f(u)\,du,\]
and, consequently, $y_0 = x_0$ almost everywhere. Since they are c\`adl\`ag functions, $y_0 = x_0$ on $[0,T)$. And $y_0(T) = x_0(T)$ holds by our assumption.

So far we have proved that in every subsequence $\{n'\}$ we can find a further subsequence $\{n^{\prime\prime}\}$  
along which $x_{n^{\prime\prime}} \ines x_0$. Hence, by the KVPK recipe, 
$x_n \starines x_0$.

In order to prove relative $S$-compactness of $\{x_n\}$ it is necessary to 
adjust integrands $A_n$ in a way suitable for the particular functional determining the relative $S$-compactness via Theorem \ref{T:criteria}.

First let us consider condition (\ref{2e1}). Suppose that $\sup_n \|x_n\|_{\infty} = +\infty.$ Then there exists a subsequence $n_k$ and numbers $t_{k} \in [0,T)$ such that $a_k = |x_{n_k}(t_k)| \to \infty$. Without loss of generality we may assume that $a_k = x_{n_k}(t_k)$ and $t_k < T$. By the right-continuity of $x_{n_k}$ we can find numbers $h_k$ such that $t_k + h_k < T$ and 
\[ x_{n_k}(t) \geq (1/2) a_k,\quad \text{ for } t \in [t_k, t_k + h_k].\]
Let $b_k = \sqrt{a_k} h_k$. Consider function $f_k (u) = (1/b_k) \GId_{[t_k,t_k+h_k]}(u)$ and the corresponding function $A_{f_k} \in \GA$.
 We have 
\begin{align*}
\int_{0}^T x_{n_k}(u) \,dA_{f_k}(u) &= \int_{0}^T x_{n_k}(u) f_k(u) \,du \\
&= 
\frac{1}{b_k} \int_{t_k}^{t_k + h_k} x_{n_k}(u)\,du \geq \frac{a_k h_k}{2 b_k} = (1/2) \sqrt{a_k} \to +\infty,
\end{align*}
while 
\[ \|A_{f_k}\|_{\infty} = \|A_{f_k}\|(T) = \int_0^T f_k(u)\,du = h_k/b_k = 1/\sqrt{a_k} \to 0.\]
It follows that  (\ref{eq:int}) cannot be satisfied.

In order to cope efficiently with condition (\ref{2e2}) we need the following lemma.

\begin{lemma}\label{Lemmaup}
Let $x\in\GD$ be such that $N = N^{a,b}(x) \geq 2$ for some $a < b$. Then there exists $A\in\GA$ such that:
\begin{align} 
\int_0^T x(t)\,dA(t) &\geq (b-a).\\
\|A\|(T) &= 2.\\
\|A\|_{\infty} &= 1/(N-1). 
\end{align}
\end{lemma}
\begin{proof}
Let 
\[0 \leq t_1 < t_2 <t_3 < t_4 < \ldots < t_{2N-1} < t_{2N}\leq T\]
be such that $x(t_{2i-1}) < a,\ x(t_{2i}) > b$, $i=1,2,\ldots, N$. By the right continuity of $x$, for each $=1,2\ldots, N-1$ there are numbers $\delta_i >0$ such that $t_{2i-1} + \delta_i < t_{2i} < t_{2i} +\delta_i < t_{2i+1}$ and 
\[ \sup_{t\in [t_{2i-1},t_{2i-1} + \delta_i]} x(t) \leq a,\quad  \inf_{t\in [t_{2i},t_{2i} + \delta_i]} x(t) \geq b.\]
Let $h_i = 1/\big(\delta_i (N-1)\big)$ and  define 
\[ f(t) = \sum_{i=1}^{N-1} (-1) h_i \GId_{[t_{2i-1}, t_{2i-1} + \delta_i]}(t) +  h_i \GId_{[t_{2i}, t_{2i} + \delta_i]}(t).\]
Then we have for $A = A_f$
\begin{align*}
\int_0^T x(t)\,dA_f(t) &= \int_0^T x(t) f(t)\,dt \\
&= \sum_{i=1}^{N-1} (-1) h_i \int_{t_{2i-1}}^{t_{2i-1} + \delta_i}x(t) \,dt + h_i \int_{t_{2i}}^{t_{2i} + \delta_i} x(t) \,dt \\
&\geq \sum_{i=1}^{N-1} h_i\delta_i \big(b-a\big) =  \big(b-a\big).
\end{align*}
Similarly
\begin{align*}
\|A_f\|(T) &= \int_0^T |f(t)|\,dt =  \sum_{i=1}^{N-1} 2 h_i \cdot \delta_i 
= 2.\\
\|A_f\|_{\infty} &= \sup_{t\in[0,T]} |A_f(t)| = \max_{i=1,2\ldots, N-1} h_i\cdot \delta_i = 1/(N-1).
\end{align*}
\end{proof}

Now suppose that  condition (\ref{2e2}) does not hold for $K = \{x_n\}$. This means that for some $a < b$ and along some subsequence $\{n'\}$ 
\[ N^{a,b}(x_{n'}) \to \infty.\]
Without loss of generality we may assume that for all $ N^{a,b}(x_{n'})  \geq 2$. For each $n'$, let  $A_{n'} = A_{f_{n'}}$ be given by Lemma  \ref{Lemmaup}. Then $A_{n'} \intau A_0 = 0$, while $\int_0^T x_{n'}(t)\,dA_{n'}(t) \geq b-a > 0$ cannot converge to  $\int_0^T x_{0}(t)\,dA_0(t) = 0$. This contradicts (\ref{eq:int}).

\end{proof}

\begin{corollary}
$x_n \starines x_0$ if, and only if,  $x_n(T) \to x_0(T)$ and for each relatively $\tau$-compact set $\cA \subset \GA$
\begin{equation}\label{eq:unif}
 \sup_{A \in \cA} \big| \int_0^T \big(x_n(u)- x_0(u)\big) \, dA(u) \big| \to 0.
\end{equation}
\end{corollary}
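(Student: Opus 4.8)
The plan is to deduce the corollary directly from Theorem \ref{thmain} by a standard compactness-versus-uniformity argument. The ``if'' direction is immediate: taking $\cA$ to be a singleton $\{A\}$ with $A\in\GA$ and noting that a convergent sequence $A_n\intau A_0$ together with its limit forms a relatively $\tau$-compact set, the uniform convergence (\ref{eq:unif}) applied along any such sequence reduces to (\ref{eq:int}) (the difference $\int_0^T x_0\,dA_n - \int_0^T x_0\,dA_0$ being handled exactly as the term $I_2(n)$ in the proof of Theorem \ref{thmain}, via integration by parts and $A_n\intau A_0$). Hence Theorem \ref{thmain} gives $x_n\starines x_0$.

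For the ``only if'' direction, assume $x_n\starines x_0$, so $x_n(T)\to x_0(T)$ and (\ref{eq:int}) holds for every $\tau$-convergent sequence in $\GA$; let $\cA\subset\GA$ be relatively $\tau$-compact and suppose (\ref{eq:unif}) fails. Then there is $\delta>0$, a subsequence $\{n'\}$, and elements $A_{n'}\in\cA$ with $\big|\int_0^T (x_{n'}-x_0)\,dA_{n'}\big| \geq \delta$. By relative $\tau$-compactness of $\cA$ we may pass to a further subsequence along which $A_{n'}\intau A_*$ for some $A_*\in\GA$ (note the limit lies in $\GA$: uniform convergence preserves continuity and $A(0)=0$, while (\ref{asup}) gives a uniform total-variation bound, so by lower semicontinuity of total variation $A_*\in\GV$). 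Applying Theorem \ref{thmain} (more precisely relation (\ref{eq:int})) to this convergent sequence $A_{n'}\intau A_*$ yields $\int_0^T x_{n'}\,dA_{n'}\to\int_0^T x_0\,dA_*$. Separately, running the integration-by-parts computation of $I_2(n)$ from the proof of Theorem \ref{thmain} with $x_0$ fixed shows $\int_0^T x_0\,dA_{n'}\to\int_0^T x_0\,dA_*$ as well. Subtracting, $\int_0^T (x_{n'}-x_0)\,dA_{n'}\to 0$, contradicting the lower bound $\delta$.

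The one point that needs a little care — and which I expect to be the main obstacle — is the claim that a relatively $\tau$-compact subset of $\GA$ actually has the sequential compactness property used above, i.e. that every sequence in $\cA$ has a $\tau$-convergent subsequence with limit in $\GA$. The topology $\tau$ is the mixed topology of Definition \ref{gat}: uniform convergence on the ball $\{\|A\|(T)\le M\}$. On such a ball uniform convergence is metrizable, so relative compactness does give subsequential limits; one must check the limit inherits continuity, $A(0)=0$, and finite variation, which follows from uniform convergence and lower semicontinuity of total variation together with the uniform bound (\ref{asup}). Once this is in place the rest is the routine ``failure of uniform convergence on a compact set produces a bad convergent subsequence'' pattern, combined with the already-established Theorem \ref{thmain}.
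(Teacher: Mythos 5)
Your proposal is correct and follows essentially the same route as the paper: the easy direction by specializing the uniform statement to a convergent sequence (viewed as a relatively $\tau$-compact set), and the converse by contradiction, extracting a $\tau$-convergent subsequence from the relatively compact family $\cA$ and invoking (\ref{eq:int}). You merely spell out two points the paper leaves implicit — that the subsequential limit stays in $\GA$ and that $\int_0^T x_0\,dA_{n'} \to \int_0^T x_0\,dA_*$ (both obtainable by applying Theorem \ref{thmain} to the constant sequence $x_0$) — which is a reasonable amount of extra care, not a different argument.
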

\begin{proof} 
Clearly,  (\ref{eq:unif}) implies (\ref{eq:int}). To prove the converse, assume 
(\ref{eq:int}) and suppose that  (\ref{eq:unif}) does not hold for some 
relatively $\tau$-compact set $\cA \subset \GA$. This means that for some $\eta > 0$ there exists a subsequence $\{n'\}$ and elements $A_{n'}$ of
$\cA$ such that for all $n'$
\[ \big| \int_0^T \big(x_{n'}(u)- x_0(u)\big) \, dA_{n'}(u) \big| > \eta.\]
Passing to a $\tau$-convergent subsequence $A_{n^{\bis}}$ we obtain a contradiction with (\ref{eq:int}).
\end{proof}

\begin{remark}
Denote by $\Sigma$ the locally convex topology on $\GD$ given by the seminorm $\rho_1(x) = |x(1)|$ and the seminorms
\[ \rho_{\cA}(x) = \sup_{A\in \cA} \big| \int_0^1 x(u)\,dA(u)\big|,\]
where $\cA$ runs over relatively $\tau$-compact subsets of $\GA$.

 Then $x_n \starines x_0$ if, and only if, $x_n\intopol{\Sigma} x_0$ and 
 so $S \supset \Sigma$, for $S$ is sequential.
\end{remark}

\begin{question}
Is it true that $S \equiv \Sigma$? Positive answer would allow stating that  $(\GD,S)$ is a locally convex linear topological space.
\end{question}

\begin{remark}
Even if $S \varsupsetneq \Sigma$, the compact sets are the same in both topologies, as well as classes of sequentially lower-semicontinuous functions. 
Exploring $S$-compactness and $S$-lower-semicontinuity Guo et al. \cite{GTT17} obtained interesting results on martingale optimal transport on the Skorokhod space. It seems that in problems of such type  the local convexity related to $\Sigma$ can be a useful tool as well.
\end{remark}

\section{$S$ in the hierarchy of topologies}\label{sechier}

Let us begin with listing some facts on  topologies on
$\GD$.

\begin{enumerate}
\item $\GD$ with norm $\|\cdot\|_{\infty}$ is a Banach space, but non-separable.
\item The Skorokhod $J_1$ topology is metric separable and $\big(\GD,J_1\big)$ is topologically complete. For definition and properties of $J_1$ we refer to Billingsley's classic book \cite{Bill68} rather than to its second edition.  
\item It is easy to show that $x_n \intopol{J_1} x_0$ implies $x_n \ines x_0$, hence the $S$ topology is coarser than $J_1$.
\item It was shown in \cite{BJL2016} that the $S$ topology is coarser than Skorokhod's $M_1$ topology (see \cite{Sko56} for definitions of four Skorokhod's topologies).
\item $S$ is incomparable with Skorokhod's $M_2$ topology!
\item $\big(\GD,J_1\big)$ \emph{is not a linear topological space}, for addition is not sequentially $J_1$-continuous, as Figure 1 shows.
\item On the contrary, the sequence $\{f_n\}$ defined in Figure 1 is $S$-convergent to $0$ and exhibits a typical for $S$  phenomenon of self-cancelling oscillations. Addition is \emph{sequentially continuous} in $S$! 
\item We do not know, whether addition is \emph{continuos}, as a function on the product $\GD \times \GD$ with \emph{product topology} $S\times S$ (in general sequential continuity does not imply continuity). Therefore we do not know, whether $(\GD,S)$ is a linear topological space.
\end{enumerate}

\begin{figure}[h]\label{rys1}
\begin{center}\includegraphics[width=10cm]{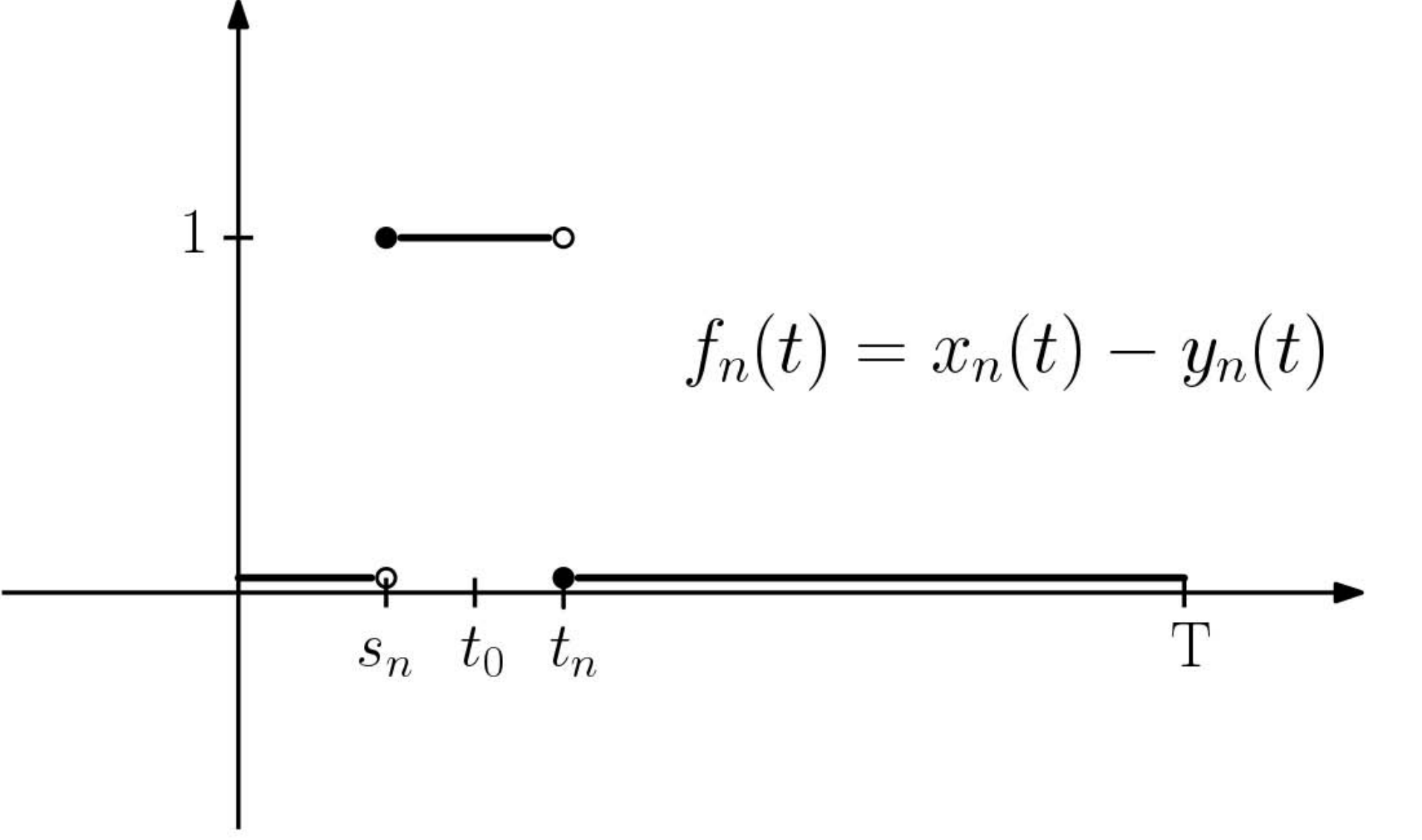}
\end{center}
\caption{$J_1$ is  not linear}
\end{figure}

\begin{theorem}\label{finest}
Suppose $\sigma$ is a topology on $\GD = \GD ([0,T])$ which satisfies the following assumptions.
\begin{align}
&(\GD, \sigma) \text{ is a linear topological space } \label{th4:a}\\
&\sigma \text{ is coarser than the uniform topology generated by the norm } \|\cdot\|_{\infty}. \label{th4:b} \\
&\text{ For each $A > 0$ the set $\{ a \GId_{[u, T]}\,;\, |a| \leq A, u \in [0,T]\}$ is relatively $\sigma$-compact.} \label{th4:c}
\end{align} 

Then $\sigma$ is coarser than the $S$ topology.
\end{theorem}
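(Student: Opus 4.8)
The goal is to show that the identity map $(\GD,S)\to(\GD,\sigma)$ is continuous. Since $S$ is a \emph{sequential} topology, this is the same as sequential continuity; and since $S$-convergence is the $\cL^*$-convergence $\starines$ obtained from $\ines$ by the KVPK recipe (so $x_n\starines x_0$ iff every subsequence has a further subsequence converging to $x_0$ in the sense of $\ines$), while $\sigma$-convergence obeys the subsequence principle valid in any topological space, it is enough to prove the single implication
\[
x_n\ines x_0 \qquad\Longrightarrow\qquad x_n\intopol{\sigma}x_0 .
\]

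\textbf{Reduction to bounded variation.} Fix $\varepsilon>0$ and let $\{v_{n,\varepsilon}\}\subset\bV$ witness $x_n\ines x_0$, so $\|x_n-v_{n,\varepsilon}\|_\infty\le\varepsilon$, $v_{n,\varepsilon}\Rightarrow v_{0,\varepsilon}$, and, by Banach--Steinhaus, $\sup_n\|v_{n,\varepsilon}\|(T)<\infty$. Given a $\sigma$-neighbourhood $U$ of $0$, choose (using that $(\GD,\sigma)$ is a linear topological space) a balanced $\sigma$-neighbourhood $V$ of $0$ with $V+V+V\subseteq U$; by \eqref{th4:b} some norm ball $\{\|z\|_\infty\le\rho\}$ lies in $V$, and we take $\varepsilon<\rho$. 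From $x_n-x_0=(x_n-v_{n,\varepsilon})+(v_{n,\varepsilon}-v_{0,\varepsilon})-(x_0-v_{0,\varepsilon})$ the first and third summands are in $V$ for every $n$, so everything reduces to the following statement, which is where \eqref{th4:c} must be used: \emph{if $v_n,v_0\in\bV$, $v_n\Rightarrow v_0$ and $\sup_n\|v_n\|(T)<\infty$, then $v_n\intopol{\sigma}v_0$.}

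\textbf{The bounded-variation case.} By the subsequence principle one may pass to a subsequence, and then (separability of $C([0,T])$, weak-$*$ compactness of bounded sets of measures) to a further subsequence along which $|dv_n|\to\lambda$ weakly-$*$ for some finite positive $\lambda$. Pick a partition $0=s_0<s_1<\dots<s_m=T$ whose interior points are continuity points of $\lambda$ and fine enough that the non-atomic part of $\lambda$ charges each $[s_{j-1},s_j)$ by at most $\rho$, and replace $v_n$ by its step-function discretisation $D_\pi(v_n)=\sum_{j}\big(v_n(s_j^-)-v_n(s_{j-1}^-)\big)\GId_{[s_{j-1},T]}+\big(v_n(T)-v_n(T^-)\big)\GId_{[T,T]}$ (with $v_n(s_0^-):=0$). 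The decisive point is that, the partition being \emph{fixed}, all the $D_\pi(v_n)$ lie in the fixed finite-dimensional subspace spanned by the $\GId_{[s_i,T]}$, and their coefficients converge: $v_n(s_j^-)\to v_0(s_j^-)$ at continuity points of $\lambda$ (a Portmanteau argument applied to the Jordan parts of the signed measures $dv_n$), and $v_n(T)\to v_0(T)$ automatically. Hence $D_\pi(v_n)\intopol{\sigma}D_\pi(v_0)$ for free, by continuity of addition and scalar multiplication. The error $v_n-D_\pi(v_n)$ is $\|\cdot\|_\infty$-small away from the atoms of $\lambda$; near the finitely many atoms of $\lambda$ it need not vanish, and there one isolates the corresponding jumps of $v_n$ (whose locations and sizes converge along the subsequence) and invokes:

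\emph{Lemma.} If $u_n\to u_0$ in $[0,T]$, then $\GId_{[u_n,T]}\intopol{\sigma}\GId_{[u_0,T]}$.

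\textbf{The obstacle.} The Lemma — equivalently, $\sigma$-continuity of the parametrisation $u\mapsto\GId_{[u,T]}$ — is the heart of the matter and the step I expect to be hardest. The natural route: the $\GId_{[u_n,T]}$ all lie in the relatively $\sigma$-compact set $C_1=\{a\GId_{[u,T]}:|a|\le1\}$, so the sequence has $\sigma$-convergent subnets; one must show every such cluster point equals $\GId_{[u_0,T]}$, whence the sequence converges. Making this rigorous from \eqref{th4:c} together with linearity and \eqref{th4:b} alone — with no a priori supply of $\sigma$-continuous functionals to pin the cluster point down — is the delicate part; all the rest is bookkeeping with the subsequence principle, Portmanteau for signed measures, and the three-term decompositions above. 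Once the bounded-variation case is in hand, combining it with the two reductions yields $x_n\ines x_0\Rightarrow x_n\intopol{\sigma}x_0$, and hence $\sigma\subseteq S$.
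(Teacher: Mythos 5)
Your reductions are sound: passing to the single implication $x_n\ines x_0\Rightarrow x_n\intopol{\sigma}x_0$ is legitimate (and the paper does essentially the same, even settling for a subsequence), and the closing three\nobreakdash-term neighbourhood argument is exactly the right move. But the proposal is not a proof, and you say so yourself: everything funnels through the Lemma asserting that $u\mapsto\GId_{[u,T]}$ is sequentially $\sigma$-continuous, and that Lemma is left unproved. The gap is genuine and, on your route, load-bearing. Note that the Lemma does not follow from (\ref{th4:a}) and (\ref{th4:b}) alone: a linear topology coarser than the sup norm may separate $\GId_{[u_n,T]}$ from $\GId_{[u_0,T]}$ uniformly in $n$ (consider the sup-norm-continuous seminorm $x\mapsto|x(u_0)-x(u_0-)|$), so any proof must extract the continuity from the compactness in (\ref{th4:c}); but compactness only supplies cluster points, and with no stock of $\sigma$-continuous functionals you have no way to identify a cluster point as $\GId_{[u_0,T]}$ --- which is precisely the difficulty you flag without resolving. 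A secondary problem: your error analysis near an atom of $\lambda$ presumes that the atom's mass is carried by a single jump of $v_n$ with converging location and size, whereas $v_n$ may accumulate that mass through many small jumps clustering at the atom, in which case there is no single term $a_n\GId_{[u_n,T]}$ to isolate and the sup-norm error of $v_n-D_\pi(v_n)$ does not reduce to instances of the Lemma.

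The paper's proof is organised precisely so that (\ref{th4:c}) is used only through compactness and never through continuity of the parametrisation $u\mapsto\GId_{[u,T]}$. Instead of the generic bounded-variation witnesses $v_{n,\varepsilon}\in\bV$ from the definition of $\ines$, it builds canonical approximants $v_\varepsilon(x_n)$ by first-exit times: these are step functions with $\|x_n-v_\varepsilon(x_n)\|_\infty\le\varepsilon$ whose number of jumps is bounded by $\sup_n N_{\varepsilon/2}(x_n)=M_\varepsilon<\infty$ and whose jump amplitudes are bounded uniformly in $n$, both by the relative $S$-compactness criterion (Theorem \ref{T:criteria}). Consequently the whole sequence $\{v_\varepsilon(x_n)\}$ lies in the algebraic sum of $M_\varepsilon+1$ copies of a set of the form appearing in (\ref{th4:c}), and this sum is relatively $\sigma$-compact because addition is $\sigma$-continuous; a $\sigma$-convergent subsequence is then obtained from compactness alone, a further extraction (finitely many converging jump times and sizes) gives weak-$*$ convergence of the same step functions, and the limit is placed within $\varepsilon$ of $x_0$ in sup norm via the $S$-lower semicontinuity of $\|\cdot\|_\infty$ (Corollary 2.10 of \cite{Jak97}). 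A diagonal extraction over $\varepsilon_m\searrow0$ and the neighbourhood argument you already wrote then finish the proof. If you want to repair your argument, the cleanest fix is to drop the weak-$*$ discretisation of general BV functions and adopt this finite-sum-of-compacts device, which is where the hypotheses actually do their work.
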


\begin{proof}
We claim that it is enough to prove that $x_n \ines x_0$ implies $x_{n'} \intopol{\sigma} x_0$ along some subsequence $\{n'\}$. Indeed, this implies that any $\sigma_s$-closed set is also $S$-closed (for $S$ is sequential), and so $\sigma_s$ is coarser than $S$. Since we  always have $\sigma \subset \sigma_s$, our claim follows.

So let us assume that $x_n \ines x_0$. 
 For $\varepsilon
> 0$ and $x \in \GD$ let us define
\begin{align*}
\tau^{\varepsilon}_0(x) &= 0 \\
\tau^{\varepsilon}_k(x) &= \inf \{t > \tau^{\varepsilon}_{k-1}(x) : |x(t)
- x(\tau^{\varepsilon}_{k-1}(x))| > \varepsilon\},\
k=1,2,\ldots.
\end{align*}
(where by convention $\inf \emptyset = +\infty$) and let
\[
v_{\varepsilon}(x)(t) =  x(\tau^{\varepsilon}_k(x))\ \mbox{ if  }
\tau^{\varepsilon}_k(x)\leq t < \tau^{\varepsilon}_{k+1}(x),\
 t\in [0,T], k=0,1,2,\ldots .
\]
Then by the very definition
$
\|x - v_{\varepsilon}(x)\|_{\infty} \leq \varepsilon.
$
Similarly, if we set 
\[ M^{\varepsilon}(x) = \max \{ k\,;\, \tau_k^{\varepsilon}(x) \leq T\},\]
then $M^{\varepsilon}(x) \leq N_{\varepsilon/2}(x)$ and
by Theorem \ref{T:criteria} we have
\begin{align}
\sup_n \|v_{\varepsilon}(x_n)\|_{\infty} &\leq \varepsilon + 
\sup_n \|x_n\|_{\infty} =: A_{\varepsilon} < +\infty, \label{eq:aa}\\
\sup_n M^{\varepsilon}(x_n) &\leq \sup_n N_{\varepsilon/2}(x_n) =: M_{\epsilon} < +\infty.\label{eq:em}
\end{align}
Since $v_{\varepsilon}(x)$ varies only through $M^{\varepsilon}(x)$ jumps, it can be represented as a sum of $M^{\varepsilon}(x)+1$ terms:
\begin{equation}\label{form}
 v_{\varepsilon}(x) = \sum_{k=0}^{M^{\varepsilon}(x)} z_k(x)  \GId_{[\tau_k^{\varepsilon}(x), T]},
\end{equation}
where $z_0(x) = 0$ and
\[ z_k(x) = x\big(\tau_k^{\varepsilon}(x)\big) - \sum_{j=0}^{k-1} x\big(\tau_j^{\varepsilon}(x)\big)\,\quad k =1,2, \ldots, M^{\varepsilon}(x).\]
By (\ref{eq:aa}) and (\ref{eq:em}) we obtain
\[ \sup_n \max_k |z_k(x_n)| \leq (M_{\varepsilon} + 1) A_{\varepsilon}. \]
It follows that the sequence $\{v_{\varepsilon}(x_n)\}$ lives in the algebraic sum
\[ \widetilde{K}_{\varepsilon} = \underbrace{K_{\varepsilon} +  K_{\varepsilon} +  \ldots +  K_{\varepsilon}}_{M_{\varepsilon} + 1\ \text{times}},\]
where $K_{\varepsilon} = \{ a \GId_{[u, T]}\,;\, |a| \leq \big(M_{\varepsilon} + 1\big) A_{\varepsilon}, u \in [0,T]\}$ is relatively $\sigma$-compact. Since $(\GD,\sigma)$ is linear $\widetilde{K}_{\varepsilon}$ is relatively $\sigma$-compact as well. This means that in every subsequence $\{n'\}$ one can find a further subsequence 
$\{n^{\prime\prime}\}$ such that $v_{\varepsilon}(x_{n^{\prime\prime}}) \intopol{\sigma} v_{\varepsilon}$, for some $v_{\varepsilon} \in \GD$. But we can say more: by the special form (\ref{form}) of elements $v_{\varepsilon}(x_n)$ (bounded number of jumps with bounded amplitudes) we may extract a further subsequence $\{n'''\}$ such that  $v_{\varepsilon}(x_{n'''}) \Rightarrow v_{\varepsilon}$.

Now choose $\varepsilon_m \searrow 0$ and apply the diagonal procedure to extract a subsequence $n'$ such that for each $m\in\GN$  we have
along $\{n'\}$
\[ v_{\varepsilon_m}(x_{n'}) \intopol{\sigma} v_m,\qquad  v_{\varepsilon_m}(x_{n'}) \Rightarrow v_m\]
for some $v_m = v_{\varepsilon_m} \in \GD$. Notice that $v_{\varepsilon_m}(x_{n'}) \Rightarrow v_m$ implies $v_{\varepsilon_m}(x_{n'}) \ines v_m$. By Corollary 2.10 in \cite{Jak97} 
\begin{equation}\label{semi}
 \|v_m - x_0\|_{\infty} \leq  \liminf_{n'} \| v_{\varepsilon_m}(x_{n'}) - x_{n'} \|_{\infty} \leq \varepsilon_m \to 0.
\end{equation}
For later purposes we may write $v_m - x_0 \in B_{\varepsilon_m}$, where
 $B_{r} = \{ x\in \GD\,;\, \|x\|_{\infty} \leq r\}$ for $r > 0$.

 Our final task consists in proving $x_{n'} \intopol{\sigma} x_0$.
Let $V$ be a $\sigma$-open neighborhood of $x_0$. By the linearity there exists a $\sigma$-open neighborhood $W$ of $0$ such that $W + W \subset V - x_0$. Since $\sigma$ is coarser than the uniform topology, there exists  $\delta > 0$ such that $B_{2\delta} \subset W$.  Let $m$ be such that $\varepsilon_m < \delta$. Then for $n'$ large enough we have
\begin{align*}
x_{n'}  &= x_{n'} - v_{\varepsilon_m}(x_{n'}) +  v_{\varepsilon_m}(x_{n'}) - v_m + v_m - x_0 + x_0\\
 &\in B_{\varepsilon_m} + W + B_{\varepsilon_m} + x_0 \subset W + W + x_0 \subset V.
\end{align*}
\end{proof}

\begin{remark} Let us consider the space 
$L^p([0,T]) = L^p\big([0,T], \cL|_{[0,T]}, \ell|_{[0,T]}\big)$, 
$p\in [0,+\infty]$ of Lebesgue-measurable functions on $[0,T]$ 
(here $\ell$ stands for the Lebesgue measure). Of course, for each 
$p$ we have $\GD([0,T]) \subset L^p([0,T])$. Moreover, the induced 
metric converts $\GD([0,T])$ into a normed space (if $p\in [1,+\infty)$) 
or a metric linear space (if $p \in [0,1]$). Clearly, assumptions 
(\ref{th4:a}) - (\ref{th4:c}) are satisfied and by our Theorem \ref{finest} 
all mentioned metric topologies are coarser than $S$.
\end{remark}
\begin{remark}
If we replace $\ell$ with another \emph{atomless} finite measure 
$\mu$ on $[0,T]$, then again the metric topologies induced by spaces $L^p([0,T], \mu)$, $p \in [0,+\infty)$,  are coarser than the $S$ topology.

This is not so, if we admit atoms for $\mu$, for (\ref{th4:c}) is then violated.
\end{remark}

\begin{remark}
In Introduction we suggested that  the $S$ topology is \emph{almost} finer than any linear topology which is coarser than Skorokhod's $J_1$ topology. The delicate point is that condition (\ref{th4:c}) does not hold for $J_1$. The corresponding typical example is given in Figure 2, with parameters $t_n \to 0$. To overcome this difficulty we shall introduce a variant of the $J_1$ topology, called $mJ_1$ ($m$ - for modified), which slightly weakens the original topology and for which condition (\ref{th4:c}) is satisfied.
\end{remark}

 \begin{figure}[h]\label{rys2}
\begin{center}\includegraphics[width=10cm]{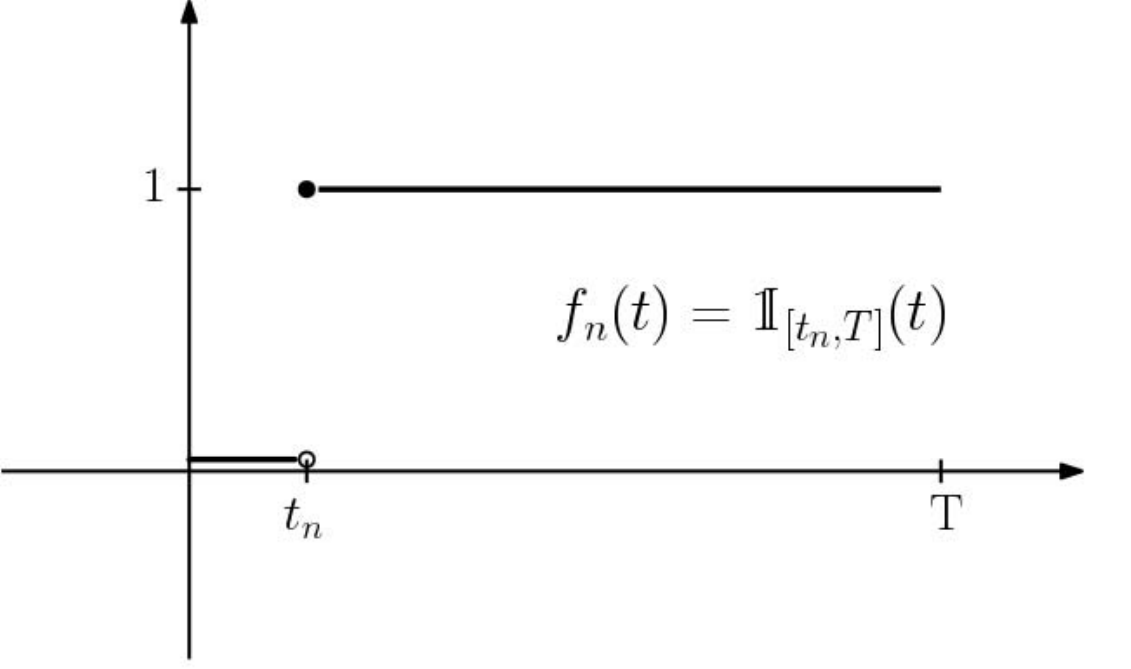}
\end{center}
\caption{A sequence of single jumps that does not converge in $J_1$, but converges in $mJ_1$}
\end{figure}

\begin{definition}[The $mJ_1$ topology]
Fix $\varepsilon > 0$ and consider a one-to-one embedding
\[ \GD([0,T]) \ni x \mapsto  \widetilde{x} \in \GD([-\varepsilon, T + \varepsilon])\]
given by the formula
\begin{equation}
\widetilde{x}(t) = \begin{cases} 0 &\text{ if\quad $-\varepsilon \leq t < 0$},\\
x(t) &\text{ if\quad $0\leq t < T$},\\
x(T) &\text{ if\quad $T \leq t \leq T+\varepsilon$}.
\end{cases}
\end{equation}
Take the complete metric $d_{Sk}$ on $\GD([-\varepsilon, T + \varepsilon])$ (see \cite{Bill68})) and define
\[ d(x,y) = d_{Sk}\big(\widetilde{x},\widetilde{y}\big).\]
Then $(\GD, d)$ becomes a metric space and the corresponding topology will be called $mJ_1$.
\end{definition}

\begin{theorem}[Basic facts on $mJ_1$]\mbox{}

\begin{description}
\item{\bf (i)}  $(\GD, d)$ is a Polish (i.e. complete and separable) metric space.
\item{\bf (ii)} A subset $K \subset \GD$ is relatively $mJ_1$-compact iff it is uniformly bounded:
\begin{equation}
\sup_{x\in K} \sup_{t\in [0,T]} | x(t) | < +\infty,
\end{equation}
and
\begin{equation}
\lim_{\delta \to 0} \sup_{x\in K} \sup_{0- \leq s < t < u \leq T\atop
u - s < \delta} \min\{| x(t) - x(s)|, |x(u) - x(t)|\} = 0,
\end{equation}
where we use the conventions $x(0-) = 0$ and $u - (0-) = u$.
\item{\bf (iii)}  For each $A > 0$ the set $\{ a \GId_{[u, T]}\,;\, |a| \leq A, u \in [0,T]\}$ is relatively $mJ_1$-compact.
\end{description}
\label{emJeyone}
\end{theorem}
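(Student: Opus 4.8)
The plan is to prove the three parts of Theorem~\ref{emJeyone} by transporting known facts about the classical Skorokhod space $\GD([-\varepsilon,T+\varepsilon])$ through the embedding $x\mapsto\widetilde x$. The key observation is that this map is an isometry from $(\GD,d)$ onto its image $\GD_0 := \{\widetilde x : x\in\GD([0,T])\}\subset\GD([-\varepsilon,T+\varepsilon])$, so everything reduces to understanding $\GD_0$ as a subset of a Polish space under $d_{Sk}$.

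For part (i), I would first check that $\GD_0$ is a closed subset of $(\GD([-\varepsilon,T+\varepsilon]),d_{Sk})$: a function $\widetilde x$ is characterised by being identically $0$ on $[-\varepsilon,0)$ (equivalently, having a jump of size $x(0)$ at $0$ and being constant-left-continuous there in a way compatible with vanishing before) and being constant on $[T,T+\varepsilon]$. Both properties are preserved under $J_1$-limits, so $\GD_0$ is closed, hence itself a Polish space in the restricted metric; separability is inherited and completeness follows from closedness. (One must be slightly careful about the behaviour at the endpoint $0$: the correct description is that $\widetilde x$ vanishes on $[-\varepsilon,0)$ and is right-continuous, which is a closed condition.) Since $(\GD,d)$ is isometric to $(\GD_0, d_{Sk}|_{\GD_0})$, part (i) follows.

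For part (ii), I would invoke the classical Arzel\`a--Ascoli-type compactness criterion for $(\GD([-\varepsilon,T+\varepsilon]),J_1)$ from \cite{Bill68}: a set is relatively compact iff it is uniformly bounded and its modulus $w'$ (the one using triples $s<t<u$ with $u-s<\delta$, taking $\min$ of the two increments) tends to $0$ uniformly as $\delta\to 0$, together with a control of the behaviour near the two endpoints. Applying this to $\widetilde K = \{\widetilde x : x\in K\}$, the endpoint conditions become automatic: near $T+\varepsilon$ the functions $\widetilde x$ are constant, so there is nothing to check on $[T, T+\varepsilon]$; near $-\varepsilon$ they are identically $0$. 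The only genuine content is the modulus-of-continuity condition on the ``active'' part, and since $\widetilde x\equiv 0$ on $[-\varepsilon,0)$, a triple straddling $0$ contributes exactly the expression with the conventions $x(0-)=0$, $u-(0-)=u$ stated in the theorem. Thus the abstract criterion specialises precisely to the two displayed conditions. The uniform boundedness on $[-\varepsilon,T+\varepsilon]$ is clearly equivalent to uniform boundedness on $[0,T]$.

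For part (iii), I would simply verify the criterion from part (ii) for $K = \{a\GId_{[u,T]} : |a|\le A,\ u\in[0,T]\}$. Uniform boundedness by $A$ is immediate. For the modulus: each such function has a single jump (at $u$, of size $a$, or no jump if one views $\GId_{[0,T]}$ appropriately), plus the ``virtual'' jump at $0$ coming from $\widetilde{\cdot}$; so for any triple $0-\le s<t<u'\le T$ the quantity $\min\{|x(t)-x(s)|,|x(u')-x(t)|\}$ is $0$ unless the interval $(s,u']$ contains \emph{both} jump locations — but the jump at $0$ and the jump at $u$ are at fixed positions only when $u$ is bounded away from $0$; in general $u$ can be arbitrarily close to $0$, so I must check that when $u$ is small, a short triple can straddle both. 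This is the one delicate point. The resolution: if the triple straddles both $0$ and $u$, then $s<0\le$ (first jump) and $u<u'$, so $u-s<\delta$ forces $u<\delta$; but then $|x(t)-x(s)|$ with $s<0<t\le u$ can be as large as $|a|$, yet the \emph{other} increment $|x(u')-x(t)|$ — with $t$ between the two jumps, i.e. $0<t\le u \le u' $... actually $x$ is already at its final value $a$ once past $u$, and equal to $0$ between... wait, $a\GId_{[u,T]}$ is $0$ on $[0,u)$ and $a$ on $[u,T]$, so between the virtual jump at $0$ (where $\widetilde x$ goes from $0$ to $0$ — no jump!) there is in fact no jump at $0$ for these particular functions. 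Hence $\widetilde x$ has the \emph{single} jump at $u$, and for any triple the $\min$ of the two consecutive increments is $0$ (one of the two sub-intervals misses the jump). Therefore the modulus vanishes identically and relative compactness is immediate. I expect this bookkeeping about which functions actually have a jump at $0$ to be the main (though minor) obstacle; everything else is a direct translation of Billingsley's criteria through the isometry.
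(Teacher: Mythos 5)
Your proposal is correct and follows essentially the same route as the paper: part (i) by showing the image of the embedding is closed in the Polish space $\big(\GD([-\varepsilon,T+\varepsilon]),d_{Sk}\big)$, part (ii) by specialising Billingsley's relative-compactness criterion (Theorem 14.4) through that embedding, and part (iii) by direct verification. Your bookkeeping in (iii) lands in the right place --- each $\widetilde{a\GId_{[u,T]}}$ has a single jump, so the modulus vanishes identically --- which matches the paper's one-line appeal to the definition.
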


\begin{proof}
It is easy to see that if $\widetilde{x}_n$ converges in $\big(\GD([-\varepsilon, T + \varepsilon]), d_{Sk}\big)$ to some $z$, then the limit is of the form
$\widetilde{x}_0$, for some $x_0 \in \GD$. Hence $(\GD, d)$ is homeomorphic 
to the closed subset of the Polish space $\big(\GD([-\varepsilon, T + \varepsilon]), d_{Sk}\big)$, and so it is Polish itself.

Part (ii) is a specification of Theorem 14.4 in \cite{Bill68}.

And part (iii) is a direct consequence of our definition (see Figure 2).

\end{proof}

Taking into account Theorems \ref{finest} and \ref{emJeyone} we obtain
the following interesting result, positioning the $S$ topology in the hierarchy of topologies on $\GD$.

\begin{theorem}[Maximal character of  the $S$ topology]
Every  linear topology on  $\GD$, which is coarser than $ mJ_1$,  is coarser than the $S$ topology as well.
\end{theorem}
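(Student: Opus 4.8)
The plan is to derive this final theorem as an immediate corollary of the two preceding results, Theorem~\ref{finest} and Theorem~\ref{emJeyone}. The logical skeleton is very short: suppose $\sigma$ is a linear topology on $\GD$ that is coarser than $mJ_1$; I want to apply Theorem~\ref{finest} to $\sigma$, so I need to check that $\sigma$ satisfies hypotheses \eqref{th4:a}, \eqref{th4:b}, \eqref{th4:c}. Assumption \eqref{th4:a} holds by hypothesis. For \eqref{th4:c}, I invoke part (iii) of Theorem~\ref{emJeyone}: the set $\{a\GId_{[u,T]} : |a|\le A,\ u\in[0,T]\}$ is relatively $mJ_1$-compact, hence relatively $\sigma$-compact because $\sigma$ is coarser than $mJ_1$ (a coarser topology has at least as many compact sets). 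So the only genuine point requiring an argument is \eqref{th4:b}: that $\sigma$ is coarser than the uniform topology generated by $\|\cdot\|_\infty$.

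For \eqref{th4:b}, the natural route is transitivity through $mJ_1$: I would first observe that $mJ_1$ is coarser than the uniform norm topology, and then, since $\sigma$ is coarser than $mJ_1$ by hypothesis, conclude that $\sigma$ is coarser than the uniform topology. To see that $mJ_1 \subset$ (uniform topology), note that if $\|x_n - x_0\|_\infty \to 0$ then $\|\widetilde{x}_n - \widetilde{x}_0\|_{\infty,[-\varepsilon,T+\varepsilon]} \to 0$ (the embedding is an isometry for the sup-norms, by construction of $\widetilde{x}$), and uniform convergence on $[-\varepsilon,T+\varepsilon]$ implies $d_{Sk}(\widetilde{x}_n,\widetilde{x}_0)\to 0$, i.e. $d(x_n,x_0)\to 0$; since both topologies are metrizable, sequential comparison suffices. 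Alternatively one can quote directly that $J_1$ on $\GD([-\varepsilon,T+\varepsilon])$ is coarser than the uniform topology there, a classical fact (Billingsley \cite{Bill68}).

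Having verified \eqref{th4:a}--\eqref{th4:c} for $\sigma$, Theorem~\ref{finest} yields immediately that $\sigma$ is coarser than the $S$ topology, which is exactly the assertion. I would write this up in one compact paragraph.

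I do not anticipate a real obstacle here — the theorem is explicitly flagged in the text as following "taking into account Theorems \ref{finest} and \ref{emJeyone}." The only mild subtlety is making sure the three hypotheses of Theorem~\ref{finest} are transferred correctly: \eqref{th4:c} uses that relative compactness is inherited when passing to a coarser topology, and \eqref{th4:b} uses transitivity of "coarser than" via the intermediate fact $mJ_1 \subset$ uniform topology. Both are routine, but the write-up should state them explicitly rather than leaving them implicit.
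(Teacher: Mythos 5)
Your proposal is correct and coincides with the paper's own treatment: the paper states this theorem as an immediate consequence of Theorems \ref{finest} and \ref{emJeyone}, and your write-up simply makes explicit the two routine transfers (relative $mJ_1$-compactness passing to the coarser $\sigma$ for (\ref{th4:c}), and $mJ_1\subset$ uniform topology via $d(x,y)=d_{Sk}(\widetilde{x},\widetilde{y})\leq\|x-y\|_{\infty}$ for (\ref{th4:b})) that the paper leaves implicit. No gaps.
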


\begin{remark}
{Were}   $\big(\GD,S\big)$ a linear topological space,  $S$ would be the finest linear topology on $\GD$ ``below'' $mJ_1$.
\end{remark}

\section{Extensions}\label{extensions}
\subsection{Infinite time horizon}
The problem consists in defining an analog of the $S$ topology on the
 Skorokhod space $\GD\big([0,+\infty)\big)$ of functions 
$x\,:\,\GR^+ \to \GR^1$, which are {right-continuous} at every $t\geq 0$ and
 admit {left limits} at every $t > 0$. This cannot be achieved by invoking
 consistency, because the natural projections of 
$\big(\GD\big([0,T_2]\big), S\big)$   onto $\big(\GD\big([0,T_1]\big), S\big)$, 
$0 < T_1 < T_2$,  are not continuous, due to the special role of the end 
point $T_1 \in (0,T_2)$. 

A similar phenomenon was encountered long time 
ago for Skorokhod's $J_1$ topology (see \cite{Lind73} and \cite{Whit02} 
for the ways to overcome this difficulty).
The case of the $S$ topology can be handled in a somewhat simpler 
manner, mainly due to the characterization of $\starines$ on $\GD$ given in Theorem  \ref{thmain} and the fact that we are interested in convergence of sequences only and not in a particular form of a metric.

\begin{definition}\label{esinfty}
Let $x_n \in \GD\big([0,+\infty)\big)$, $n=0,1,2,\ldots$. We will say that 
$x_n\starines x_0$ in $\GD\big([0,+\infty)\big)$, if for every $T > 0$
\begin{equation}\label{eq:int:er}
\int_0^{T} x_n(t)\, dA_n(t) \to \int_0^{T} x_0(t)\, dA_0(t),
\end{equation}
 for all sequences $\{A_n\} \subset \GA\big([0,T]\big)$ such that $A_n\intau A_0$.

The $S$ topology on $\GD\big([0,+\infty)\big)$ is the sequential topology generated by the  $\cL^*$-convergence  $x_n\starines x_0$.
\end{definition}

If $x\in  \GD\big([0,+\infty)\big)$ and $T > 0$, it will be convenient to denote by $x^T \in \GD\big([0,T]\big)$ the restriction of $x$ to $[0,T]$:
\[ x^T(t) = x(t),\ t\in [0, T].\]

We have the following analog of Theorem \ref{T:criteria}.
\begin{theorem}\label{thcompinfty}
Let $K \subset \GD\big([0,+\infty)\big)$. 

We can find in every sequence $\{x_n\}$ of elements of $K$
 a subsequence $\{x_{n_k}\}$
such that $x_{n_k} \starines x_0$, as $k\to \infty$, if, and only if, 
one of the following equivalent statements {\em (i)} and {\em (ii)} is satisfied.
\begin{description}
\item{\bf (i)} 
\begin{align*}
\sup_{x\in K}  \|x^T\|_{\infty} &< +\infty, \ \text{\em for every $T>0$}.\\
\sup_{x\in K} N^{a,b}(x^T) &< +\infty, \ \text{\em for all $T>0$ and $a < b$}.
\end{align*}
\item{\bf (ii)}
\begin{align*}
\sup_{x\in K}  \|x^T\|_{\infty} &< +\infty, \ \text{\em for every $T>0$}.\\
\sup_{x\in K} N_{\eta}(x^T) &< +\infty, \ \text{\em for all $T>0$ and $\eta > 0$}.
\end{align*}
\end{description}
\end{theorem}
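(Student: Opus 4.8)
The plan is to exhaust $[0,+\infty)$ by an increasing sequence $T_m\uparrow+\infty$ and to reduce everything to the finite-horizon criterion (Theorem~\ref{T:criteria}) together with the integral characterisation (Theorem~\ref{thmain}), via a diagonal extraction. The equivalence of (i) and (ii) is the cheapest part: for each fixed $T>0$ the two pairs of conditions, read on the restricted family $K^T:=\{x^T\,;\,x\in K\}\subset\GD([0,T])$, are equivalent by Theorem~\ref{T:criteria}(i)$\Leftrightarrow$(ii), and intersecting over all $T>0$ gives (i)$\Leftrightarrow$(ii). So it is enough to show that $K$ is relatively $S$-compact if, and only if, (i) holds.

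For necessity I would argue by contradiction, using that $\starines$ in $\GD([0,+\infty))$ forces, along the convergent subsequence, integral convergence $\int_0^T x_{n_k}\,dA_n\to\int_0^T x_0\,dA_0$ over \emph{every} finite horizon $T$. If $\sup_{x\in K}\|x^{T_0}\|_\infty=+\infty$, take $x_k\in K$ with $\|x_k^{T_0}\|_\infty\to+\infty$ and a subsequence $x_{k_j}\starines x_0$; working on $T_1:=T_0+1$ (so that $[0,T_0]\subset[0,T_1)$ and the blow-up occurs at interior points), choose $t_j\in[0,T_1)$ with $|x_{k_j}(t_j)|\to+\infty$ and run verbatim the construction from the proof of Theorem~\ref{thmain} handling condition~\ref{2e1}: it yields $A_j\in\GA([0,T_1])$ with $A_j\intau 0$ but $\int_0^{T_1}x_{k_j}\,dA_j\to+\infty$, contradicting integral convergence on $[0,T_1]$. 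If instead $\sup_{x\in K}N^{a,b}(x^{T_0})=+\infty$ for some $a<b$, pick $x_k\in K$ with $N^{a,b}(x_k^{T_0})\to\infty$ and a subsequence $x_{k_j}\starines x_0$; Lemma~\ref{Lemmaup} applied on $[0,T_0]$ (all the points entering its construction lie in $[0,T_0)$) gives $A_j\in\GA([0,T_0])$ with $A_j\intau 0$ yet $\int_0^{T_0}x_{k_j}\,dA_j\ge b-a$, again a contradiction. Hence (i) holds.

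For sufficiency I would assume (i) and fix $\{x_n\}\subset K$. Each $K^{T_m}$ satisfies \ref{2e1} and \ref{2e2} on $[0,T_m]$, so by Theorem~\ref{T:criteria} it is relatively $S$-compact in $\GD([0,T_m])$; a diagonal extraction produces a subsequence $\{x_{n_k}\}$ and $y^{(m)}\in\GD([0,T_m])$ with $x_{n_k}^{T_m}\ines y^{(m)}$, hence $x_{n_k}^{T_m}\starines y^{(m)}$ by the KVPK recipe, for every $m$. Testing with $A_f$, $f\in L^1([0,T_m])$ extended by $0$, and comparing the conclusions of Theorem~\ref{thmain} on $[0,T_m]$ and on $[0,T_{m+1}]$ gives $\int_0^{T_m}y^{(m)}f=\int_0^{T_m}y^{(m+1)}f$ for all such $f$, whence $y^{(m)}=y^{(m+1)}$ a.e.\ and, by right-continuity, on $[0,T_m)$; so the $y^{(m)}$ patch together into a well-defined $x_0\in\GD([0,+\infty))$ with $x_0|_{[0,T_m)}=y^{(m)}|_{[0,T_m)}$. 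It then remains to check $x_{n_k}\starines x_0$ in the sense of Definition~\ref{esinfty}: given $T>0$, pick $m$ with $T_m>T$ and $\{A_n\}\subset\GA([0,T])$ with $A_n\intau A_0$; extend each $A_n$ to $\widetilde A_n\in\GA([0,T_m])$ by $\widetilde A_n\equiv A_n(T)$ on $[T,T_m]$, so $\widetilde A_n\intau\widetilde A_0$. Since the $\widetilde A_n$ are continuous and constant on $[T,T_m]$, Theorem~\ref{thmain} applied to $x_{n_k}^{T_m}\starines y^{(m)}$ yields $\int_0^T x_{n_k}\,dA_n=\int_0^{T_m}x_{n_k}\,d\widetilde A_n\to\int_0^{T_m}y^{(m)}\,d\widetilde A_0=\int_0^T x_0\,dA_0$, the last step because $y^{(m)}=x_0$ on $[0,T]\subset[0,T_m)$. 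By Definition~\ref{esinfty} this is $x_{n_k}\starines x_0$, completing the sufficiency.

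The delicate point throughout is the special status of the right endpoint: the restriction map $\GD([0,T_2])\to\GD([0,T_1])$ is not $S$-continuous, the patching limits $y^{(m)}$ are only forced to agree \emph{up to} their endpoints, and a norm blow-up of $K$ exactly at a time $T_0$ can be invisible to integrals over $[0,T_0]$ because the integrators in $\GA$ are continuous. I expect this to be the main obstacle, and the way around it is built into Definition~\ref{esinfty}: since $\starines$ on $\GD([0,+\infty))$ is probed only through integrals over $[0,T]$ for all $T$, one may ``reach past'' a bad endpoint by enlarging the horizon and may ignore the countably many endpoint values — precisely what the passage to $T_0+1$ and the extension-by-constants device accomplish.
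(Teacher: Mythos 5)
Your proposal is correct and follows essentially the same route as the paper: the equivalence of (i) and (ii) via Theorem~\ref{T:criteria}, necessity by running the constructions from the proof of Theorem~\ref{thmain} (passing to a strictly larger horizon so the blow-up occurs at interior points), and sufficiency by diagonal extraction over $T_m\nearrow+\infty$, patching the limits, and extending integrators by constants. The only real divergence is the consistency step for the patched limits, where you identify $y^{(m)}=y^{(m+1)}$ a.e.\ by testing against $A_f$ with $f\in L^1$ (reusing the identification argument from the proof of Theorem~\ref{thmain}), whereas the paper invokes Corollary~2.9 of \cite{Jak97} to get pointwise convergence outside a countable set --- your variant is slightly more self-contained but otherwise equivalent.
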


\begin{proof}
The equivalence of (i) and (ii) is stated in Theorem \ref{T:criteria}, so it is enough to deal with (i) only.

\noindent{\em Necessity.}\ \ Suppose that for some $T > 0$ and along 
a sequence $\{x_n\} \subset K$ we have either $\lim_{n\to\infty}
 \sup_{t\in [0,T]} |x_n(t)| = +\infty$ or   
$\lim_{n\to\infty} N^{a,b}\big(x^T_n\big) = +\infty$ for some $a < b$. Choose $T' > T$.
The sequence 
$\{x_n\}$ contains a subsequence $x_{n_k} \starines x_0$ and so
\[ \int_0^{T'} x_{n_k}(t)\, dA_{n_k}(t) \to \int_0^{T'} x_0(t)\, dA_0(t),\ \ k\to+\infty, \]
for all sequences $\{A_{n_k}\} \subset \GA\big([0,T'])$ such that
 $A_{n_k} \intau A_0$ in $\GA\big([0,T'])$. By inspection of the proof of Theorem \ref{thmain} we see that this implies both 
\[ \limsup_{k\to\infty}  \sup_{t\in [0,T]} |x_{n_k}(t)| < +\infty,\]
and
\[  \limsup_{k\to\infty} N_{a,b}(x^T_{n_k}) < +\infty.\]
We have arrived to a contradiction. 

\noindent{\em Sufficiency. }\ \ Take any sequence $T_r \nearrow +\infty$ and assume (i).
By Theorem \ref{T:criteria} we can find a sequence $\{x_{1,n}\}$ such that   $x_{1,n}^{T_1} \ines x_{1,0}$ in  $ \GD\big([0,T_1]\big)$.
In $\{x_{1,n}\}$ we can  find a subsequence $\{x_{2,n}\}$ such that
  $x_{2,n}^{T_2} \ines x_{2,0}$ in  $ \GD\big([0,T_2]\big)$. Repeating this process and then applying the diagonal procedure we can find a subsequence $\{x_{n'}\} \subset K$ such that for every $r\in\GN$
\[ x_{n'}^{T_r} \ines x_{r,0}, \ \text{ in  $ \GD\big([0,T_r]\big)$}.\]
We claim that there exists exactly one $x_0 \in \GD\big([0,+\infty)\big)$ such that for each $r\in\GR$
\[ x_0^{T_r}(t) = x_{r,0}(t),\quad t\in [0,T_r).\]
Let $q < r$. It is enough to verify the consistency of $x_{q,0}$ and $x_{r,0}$ on $[0,T_q)$.
By Corollary 2.9 in \cite{Jak97} we can find a further subsequence $\{n^{\prime\prime}\}$ as well as countable subsets $D_q \subset [0,T_q)$ and   $D_r \subset [0,T_r)$ such that
\[  x_{n^{\prime\prime}}(t) \to x_{q,0}(t),\ t \not\in D_q,\quad x_{n^{\prime\prime}}(t) \to x_{r,0}(t),\ t \not\in D_r.\]  
It follows that if $t$ belongs to the set  $[0,T_q) \setminus \big(D_q \cup D_r)$ that is dense in $[0,T_q)$, then
\[   x_{n^{\prime\prime}}(t) \to x_{q,0}(t) = x_{r,0}(t).\]
Because both $x_{q,0}$ and $x_{r,0}^{T_q}$ are c\`adl\`ag, they are equal
on $[0,T_q)$. 

It remains to show that (\ref{eq:int:er}) holds for $x_{n^{\prime}}$ and $x_0$. Let $T > 0$. Take $T_r \geq T$. By Theorem  \ref{thmain}
\[ \int_0^{T_r} x_{n^{\prime}}(t)\, dA_{n^{\prime}}(t) \to \int_0^{T_r} x_{r,0}(t)\, dA_0(t),\]
 for all sequences $\{A_{n^{\prime}}\} \subset \GA\big([0,T_r]\big)$ such that $A_{n^{\prime}}\intau A_0$. Notice that in view of the continuity of $A_0$, the value of $x_{r,0}$ at $t=T_r$ does not contribute to the value of the integral and therefore the limit integral can be written as
 $\int_0^{T_r} x_{0}(t)\, dA_0(t)$. We have thus established (\ref{eq:int:er}) for $T= T_r$. So let $T < T_r$ and consider a sequence $\{A_{n^{\prime}}\} \subset \GA\big([0,T]\big)$ such that $A_{n^{\prime}}\intau A_0$. Taking a natural extension 
\[\widetilde{A}_{n^{\prime}}(t) = \begin{cases}
A_{n^{\prime}}(t), &\text{ if $t\in [0,T)$}; \\
A_{n^{\prime}}(T), &\text{ if $t\in [T,T_r]$};
\end{cases} \] 
we see that 
\[ \int_0^{T} x_{n^{\prime}}(t)\, dA_{n^{\prime}}(t) = \int_0^{T_r} x_{n^{\prime}}(t)\, d\widetilde{A}_{n^{\prime}}(t), \]
hence the general case is implied by the one already proved.
\end{proof}
\begin{remark} The idea to keep the star over the arrow in the above definition of the convergence generating the $S$ topology on $\GD\big([0,+\infty)\big)$ is justified by the fact that also in the infinite time horizon there  is  a notion corresponding to the $\cL$-convergence $\ines$.
\end{remark}

\begin{theorem} $x_n\starines x_0$ in $\GD\big([0,+\infty)\big)$ if, and only if, in each subsequence $\{x_{n_k}\}$ one can find a further subsequence
 $\{x_{n_{k_l}}\}$ and a sequence $T_r \nearrow +\infty$ such that in each 
$\GD([0,T_r])$
\begin{equation}
\label{elconvinfty}
 x^{T_r}_{n_{k_l}}\ines x_0^{T_r}, \ \text{ as $l\to\infty$}.
\end{equation}
\end{theorem}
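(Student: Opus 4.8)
The plan is to establish the two implications separately, using the characterization of $\starines$ on $\GD\big([0,+\infty)\big)$ from Definition~\ref{esinfty} together with Theorem~\ref{thmain} and Theorem~\ref{thcompinfty}. For the ``if'' direction, suppose that in each subsequence $\{x_{n_k}\}$ one can find a further subsequence $\{x_{n_{k_l}}\}$ and levels $T_r \nearrow +\infty$ such that \eqref{elconvinfty} holds in each $\GD([0,T_r])$. Fix $T > 0$ and a sequence $\{A_n\} \subset \GA([0,T])$ with $A_n \intau A_0$; I want to verify \eqref{eq:int:er} for the full sequence $\{x_n\}$. It suffices to show that every subsequence of the numerical sequence $\int_0^T x_n\,dA_n$ has a further subsequence converging to $\int_0^T x_0\,dA_0$. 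Given a subsequence $\{n_k\}$, pass to $\{n_{k_l}\}$ and $\{T_r\}$ as in the hypothesis, pick $r$ with $T_r \geq T$, extend $A_{n_{k_l}}$ and $A_0$ constantly on $[T,T_r]$ as in the proof of Theorem~\ref{thcompinfty} (this preserves $\intau$-convergence in $\GA([0,T_r])$), and apply Theorem~\ref{thmain} in $\GD([0,T_r])$ to $x^{T_r}_{n_{k_l}} \ines x_0^{T_r}$ (note $\ines$ implies $\starines$): this gives convergence of the integrals over $[0,T_r]$, which equal the integrals over $[0,T]$ by the constant extension. Hence the numerical subsequence converges to the right limit, and \eqref{eq:int:er} follows.

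For the ``only if'' direction, assume $x_n \starines x_0$ in $\GD\big([0,+\infty)\big)$ and let $\{x_{n_k}\}$ be an arbitrary subsequence. First I note that $\{x_{n_k}\}$ itself satisfies the hypotheses of Theorem~\ref{thcompinfty}: the convergence \eqref{eq:int:er} along $\{x_{n_k}\}$ (for each $T$ and each $\tau$-convergent $\{A_n\}$) lets me run the necessity argument from the proof of Theorem~\ref{thcompinfty} — using the test integrands built in Lemma~\ref{Lemmaup} and the single-bump integrands from the proof of Theorem~\ref{thmain} — to conclude that $\sup_k \|x^T_{n_k}\|_\infty < +\infty$ and $\sup_k N^{a,b}(x^T_{n_k}) < +\infty$ for every $T$ and every $a<b$. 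By the sufficiency part of Theorem~\ref{thcompinfty} (the diagonal extraction there), there is a further subsequence $\{x_{n_{k_l}}\}$, a sequence $T_r \nearrow +\infty$, and a limit $\widetilde{x}_0 \in \GD\big([0,+\infty)\big)$ with $x^{T_r}_{n_{k_l}} \ines \widetilde{x}_0^{T_r}$ in each $\GD([0,T_r])$. It then remains to identify $\widetilde{x}_0$ with $x_0$. By the ``if'' direction already proved (applied to $\{x_{n_{k_l}}\}$), $x_{n_{k_l}} \starines \widetilde{x}_0$; but also $x_{n_{k_l}} \starines x_0$, and the $\starines$-limit in $\GD\big([0,+\infty)\big)$ is unique — on each $[0,T)$ this follows exactly as in the proof of Theorem~\ref{thmain}, testing against $A_f(t)=\int_0^t f$ for $f \in L^1([0,T])$ to get $\widetilde{x}_0 = x_0$ a.e.\ hence on $[0,T)$ by right-continuity, and at endpoints by taking slightly larger horizons. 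Thus $\widetilde{x}_0 = x_0$ and \eqref{elconvinfty} holds with $x_0$ in place of $\widetilde{x}_0^{T_r}$.

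The main obstacle, and the place requiring the most care, is the necessity step in the ``only if'' direction: one must check that the obstruction integrands constructed in the proofs of Lemma~\ref{Lemmaup} and Theorem~\ref{thmain} — which live on a fixed finite interval — can be viewed as elements of $\GA([0,T'])$ for a slightly larger $T'$ (extending them by their terminal value, or by $0$, on $[T,T']$) so that the hypothesis \eqref{eq:int:er} at horizon $T'$ applies and yields the desired boundedness of sup-norms and up-crossing counts on $[0,T]$; this is exactly the ``inspection of the proof of Theorem~\ref{thmain}'' invoked in the proof of Theorem~\ref{thcompinfty}, and here it must be invoked along the given subsequence $\{x_{n_k}\}$ rather than the full sequence. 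Everything else is bookkeeping with the diagonal procedure, the constant extension of integrands across $[T,T_r]$, and the a.e.\ uniqueness of the $L^1$-tested limit.
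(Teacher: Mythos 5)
Your ``if'' direction is correct and is essentially the paper's argument: reduce to the $\cL^*$ property of $\starines$, extend the integrators constantly across $[T,T_r]$, and apply Theorem \ref{thmain}. The gap is in the ``only if'' direction, at the step where you claim that the diagonal extraction from the sufficiency part of Theorem \ref{thcompinfty} produces a single $\widetilde{x}_0$ with $x^{T_r}_{n_{k_l}} \ines \widetilde{x}_0^{T_r}$ in each $\GD\big([0,T_r]\big)$. That construction (run with an \emph{arbitrary} sequence $T_r \nearrow +\infty$) only yields limits $x_{r,0}$ with $x^{T_r}_{n_{k_l}} \ines x_{r,0}$, and these are consistent merely on the half-open intervals $[0,T_r)$: the terminal value $x_{r,0}(T_r)$ is forced by the convergence itself (Theorem \ref{thmain} gives $x_{n_{k_l}}(T_r) \to x_{r,0}(T_r)$) and need not equal $x_0(T_r)$, nor need the various $x_{r,0}$ be restrictions of any single c\`adl\`ag function. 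Concretely, take $x_n = \GId_{[1+1/n,+\infty)}$ and $x_0 = \GId_{[1,+\infty)}$. Then $x_n \starines x_0$ in $\GD\big([0,+\infty)\big)$ (the single point $t=1$ does not affect integrals against continuous integrators), but if $T_1 = 1$ is among the cut points one gets $x_n^{T_1} \equiv 0 \ines 0$ while $x_0^{T_1}(1) = 1$, so $x^{T_1}_{n} \not\ines x_0^{T_1}$. Your later identification step only shows $\widetilde{x}_0 = x_0$ as elements of $\GD\big([0,+\infty)\big)$; it cannot repair this, because the mismatch is between $x_{r,0}(T_r)$ and $x_0(T_r)$, not between two functions on the half-line.

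The entire content of the paper's proof of this direction is precisely the selection step you omit: the cut points $T_r$ must be \emph{chosen}. The paper passes (via Corollary 2.9 of \cite{Jak97}) to a further subsequence converging pointwise to $x_0$ outside a countable set $D$, then picks $T_r \in (T_{r-1}', T_r']\setminus D$; relative $S$-compactness of $\{x^{T_r}_{n^{\prime\prime}}\}$ in $\GD\big([0,T_r]\big)$ combined with pointwise convergence on a dense set containing $T_r$ then forces $x^{T_r}_{n^{\prime\prime}} \ines x_0^{T_r}$ with the correct terminal value (the argument of Proposition 2.14 in \cite{Jak97}). Without some such choice of the $T_r$, the assertion (\ref{elconvinfty}) can fail for the sequence of horizons your proof actually uses.
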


\begin{proof} Suppose we have the property described by (\ref{elconvinfty}). $\starines$ is an $\cL^*$-convergence, so it is sufficient to prove that (\ref{elconvinfty}) implies relation (\ref{eq:int:er}). But this is done in the final part of the proof of Theorem \ref{thcompinfty}.

It remains to show that if $x_n \starines x_0$, then we can strengthen the construction given in the sufficiency part of the proof of Theorem \ref{thcompinfty} in such a way that 
\[ x_{r,0}(T_r) = x_0(T_r),\ \ r\in\GR.\]
Let us repeat that construction for some $T_r' \nearrow +\infty$ and find a subsequence $n'$ such that
\[ x_{n'}^{T_r'} \ines x_{r,0}, \ \text{ in  $ \GD\big([0,T_r']\big)$}.\ \ r\in\GN.\]
Because (\ref{eq:int:er}) identifies the limit almost everywhere (as was shown in the proof of Theorem \ref{thmain}), we see that 
$x_{r,0}(t) = x_0(t), \ t\in [0,T_r'), \ \ r\in\GN$. 
Similarly as before, passing to a further subsequence $\{n^{\prime\prime}\}$ 
we have for some countable set $D \subset \GR^+$  
\[ x_{n^{\prime\prime}}(t) \to x_0(t),\ \ t\not\in D.\]
Take any $T_r \in (T_{r-1}', T_r'] \setminus D$. Then $\{x_{n^{\prime\prime}}^{T_r}\}$ is relatively $S$-compact in $\GD\big([0,T_r]\big)$ and 
$x_{n^{\prime\prime}}(t) \to x_0(t)$ on a dense set containing $T_r$. This implies $x_{n^{\prime\prime}} \ines x_0$ in $\GD\big([0,T_r]\big)$ (for if not, we would be able to show that $\{x_{n^{\prime\prime}}^{T_r}\}$ {\em is  not} relatively $S$-compact in $\GD\big([0,T_r]\big)$, just as in the proof of Proposition 2.14 in \cite{Jak97}).
\end{proof}

\begin{definition}
Let $x_n \in \GD\big([0,+\infty)\big)$, $n=0,1,2,\ldots$. We will say that 
$x_n\ines x_0$ in $\GD\big([0,+\infty)\big)$, if one can find a sequence $T_r \nearrow +\infty$ such that for every $r\in\GN$ 
\[x^{T_r}_{n}\ines x_0^{T_r}\ \ \text{in $\GD\big([0,T_r]\big)$,\ \ as $n\to\infty$}.\]
\end{definition}

\subsection{Functions with values in $\GR^d$}

Let $\GD\big([0,+\infty):\GR^d\big)$ be a family of functions $\mathbf{x} : \GR^+ \to \GR^d$, which are right-continuous at every $t \geq 0$ and admit left limits at every $t >0$. Taking coordinates, we may identify  $\GD\big([0,+\infty):\GR^d\big)$ with the product space $\Big(\GD\big([0,+\infty)\big)\Big)^d$. If we equip each space  $\GD\big([0,+\infty)\big)$ with the sequential $S$ topology, then the natural sequential topology on the product is given by the convergence in the components. In other words, we have
\begin{definition}
Let $\mathbf{x}_n \in \GD\big([0,+\infty):\GR^d\big)$, $n=0,1,2,\ldots$, where  
\[\mathbf{x}_n(t) = \big(\mathbf{x}_n^1(t),\mathbf{x}_n^2(t),\ldots, \mathbf{x}_n^d(t)\big).\]
 Then we will say that 
$\mathbf{x}_n\starines \mathbf{x}_0$  if 
\[ \mathbf{x}_n^i \starines \mathbf{x}_n^i\ \ \text{in $\GD\big([0,+\infty)\big)$ for each $i = 1,2,\ldots, d$.}\]
In a similar way we define convergence $\ines$ in $\GD\big([0,+\infty):\GR^d\big)$ and convergences $\starines$ and $\ines$ in $\GD\big([0,T]:\GR^d\big)$. 

The $S$ topology on $\GD\big([0,+\infty):\GR^d\big)$ and on $\GD\big([0,T]:\GR^d\big)$ is the sequential topology generated by $\starines$ considered in the corresponding space. 
\end{definition}

Given this natural definition of  the $S$ topology, the criteria of relative $S$-compactness in the multidimensional setting are obvious.

\begin{theorem}
Let $\mathbf{K} \subset \GD\big([0,+\infty):\GR^d\big)$ and let $\mathbf{K}^i = \{ \mathbf{x}^i ; \mathbf{x} \in \mathbf{K}  \} \subset \GD\big([0,+\infty)\big)$. 

Then $\mathbf{K}$ is relatively $S$-compact in  $\GD\big([0,+\infty):\GR^d\big)$ if, and only if, each set $\mathbf{K}^i$, $i=1,2,\ldots, d$, is relatively compact in $\GD\big([0,+\infty)\big)$.

The same equivalence holds, if we replace  
$\GD\big([0,+\infty):\GR^d\big)$ with $\GD\big([0,T]:\GR^d\big)$ 
and  $\GD\big([0,+\infty)\big)$ with $\GD\big([0,T]\big)$.
\end{theorem}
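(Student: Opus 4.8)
The plan is to exploit the fact that the $S$ topology on a product was \emph{defined} as the sequential product topology coming from coordinatewise $\starines$-convergence, so that relative $S$-compactness of $\mathbf{K}$ is, essentially by definition, a statement about convergence of sequences. Concretely, I would argue as follows. For the ``only if'' direction, suppose $\mathbf{K}$ is relatively $S$-compact and fix a coordinate $i$. Given a sequence $\{x_n\} \subset \mathbf{K}^i$, lift it to a sequence $\{\mathbf{x}_n\} \subset \mathbf{K}$ with $\mathbf{x}_n^i = x_n$; by relative $S$-compactness of $\mathbf{K}$ there is a subsequence with $\mathbf{x}_{n_k} \starines \mathbf{x}_0$, and by the definition of $\starines$ in the product this forces $x_{n_k} = \mathbf{x}_{n_k}^i \starines \mathbf{x}_0^i$. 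Hence every sequence in $\mathbf{K}^i$ has an $S$-convergent subsequence, i.e. $\mathbf{K}^i$ is relatively $S$-compact in $\GD([0,+\infty))$.

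For the ``if'' direction, assume each $\mathbf{K}^i$ is relatively $S$-compact. Take any sequence $\{\mathbf{x}_n\} \subset \mathbf{K}$. Apply relative $S$-compactness of $\mathbf{K}^1$ to $\{\mathbf{x}_n^1\}$ to extract a subsequence along which the first coordinate $\starines$-converges; then, working inside that subsequence, apply relative $S$-compactness of $\mathbf{K}^2$ to the second coordinate, and so on up to $i = d$. After $d$ successive extractions we obtain a single subsequence $\{\mathbf{x}_{n_k}\}$ along which \emph{every} coordinate $\mathbf{x}_{n_k}^i$ $\starines$-converges to some limit $x_0^i \in \GD([0,+\infty))$. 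Setting $\mathbf{x}_0 = (x_0^1, \ldots, x_0^d)$ we get $\mathbf{x}_{n_k} \starines \mathbf{x}_0$ by the defining coordinatewise criterion, so $\mathbf{K}$ is relatively $S$-compact. The same two arguments, verbatim with $[0,+\infty)$ replaced by $[0,T]$, give the finite-horizon statement.

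The only genuine subtlety—and what I would flag as the place to be careful rather than a hard obstacle—is that ``relative $S$-compactness'' must here mean \emph{sequential} relative compactness (every sequence has an $S$-convergent subsequence), which is exactly the notion used throughout the paper (Theorems~\ref{T:criteria} and \ref{thcompinfty}) and is the natural one in a sequential topological space; the diagonal/iterated-extraction step above is valid precisely because we only need to control countably many passages to subsequences and the topology in question is sequential, so the limit object recovered coordinatewise genuinely realizes the product convergence. No metrizability is needed and no appeal to Tychonoff-type theorems is required, since everything reduces to the one-dimensional results already established.

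Thus the whole proof is a short formal manipulation: unwind the definition of the product $S$ topology, use the one-dimensional compactness criteria coordinate by coordinate, and recombine via a finite iterated subsequence extraction. I would write it out in two short paragraphs (one per implication) and remark that the finite-horizon case is identical.
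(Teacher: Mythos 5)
Your argument is correct and is exactly the routine coordinatewise lifting and finite iterated subsequence extraction that the paper has in mind: the paper offers no written proof, declaring the criteria ``obvious'' given the coordinatewise definition of $\starines$ on the product. Your explicit remark that relative $S$-compactness means \emph{sequential} relative compactness (so that only finitely many subsequence extractions are needed and no Tychonoff-type argument is required) is the right point to make precise, and nothing further is missing.
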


\section{Appendix: Sequential topologies generated by $\cL$-convergences}\label{appendix}

Following Fr\'echet, we say that $\cX$   is a space of type $\cL$, if among all sequences of elements of $\cX$
a class $\cC(\to)$  of ``convergent" sequences is
distinguished in such a way that:
\begin{description}
\item{(i)}
To each convergent sequence
$(x_n)$  exactly one point $x_0$, called
``the limit",
 is attached (symbolically: $x_n\lr x_0$) 
\item{(ii)}
For every $x\in\cX,$ the constant sequence $(x, x, \ldots)$
is convergent to $x$.
\item{(iii)}
If $x_n\lr x_0$ and $1\leq n_1<n_2<\ldots$, then the
subsequence $(x_{n_k})$  converges, and to the same
limit: $x_{n_k}\lr x_0,$ \ as $k\to\infty.$
\end{description}

Using the $\cL$-convergence $\lr$ one creates the family of closed sets.

\begin{definition}  Say that $F\subset \cX$ is $\tau(\to)$-closed if limits of $\lr$-convergent sequences of elements of $F$ remain in $F$, i.e. if $x_n \in F, \ n\in\GN$ and $x_n\lr x_0$, then $x_0\in F$. The topology given by $\tau(\to)$-closed sets is called the sequential topology generated by the $\cL$-convergence $\lr$ and will be denoted by $\tau(\to)$. 
\end{definition}
\begin{remark}
It must be stressed that for a sequential topology to be
defined only the extremely simple properties (ii) and (iii) of convergence $\lr$
are required. On the other hand, the  topology obtained this way has in general extremely poor separation properties. It is only T1 space due to the fact that in view of (ii) above each one-point set $\{x\}$ is $\tau(\to)$-closed. 

But it is enough for the topology $\tau(\to)$ to define  
a new (in general)
convergence,
``$\conver_{\tau(\to)}$'' say, which, after Urysohn,   is called the convergence
``a posteriori", in order to distinguish
from the original convergence (= convergence ``a priori", i.e. ``$\lr$'').
So $(x_n)$   converges \emph{a posteriori}  to $x_0,$  if
for every $\tau(\to)$-open set $U$ containing $x_0$   eventually all elements of
the sequence $(x_n)$  belong to $U$.  

Kantorovich {\it et al}
\cite[Theorem 2.42, p.51]{Kant50} and Kisy\'nski \cite{Kis60} gave a familiar characterization of the convergence {\em a posteriori} in terms of the convergence {\em a priori}.
\end{remark}

\begin{theorem}[KVPK recipe]\label{kvpk_recipe}
$\{x_n\}$ converges to $x_0$ {\em a posteriori} if, and only if, each subsequence $\{x_{n_k}\}$ contains a further subsequence $\{x_{n_{k_l}}\}$ convergent to $x_0$ {\em a priori}.
\end{theorem}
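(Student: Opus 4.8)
The plan is to prove the two implications of the equivalence separately. Write $\lr$ for the a priori $\cL$-convergence and denote a posteriori convergence by $\conver_{\tau(\to)}$, as in the Appendix. The implication \emph{subsequence condition $\Rightarrow$ a posteriori} I would dispatch first, since it is immediate from the definition of $\tau(\to)$-closed sets. Assuming the subsequence condition, suppose $\{x_n\}$ does not converge to $x_0$ a posteriori; then some $\tau(\to)$-open $U \ni x_0$ does not eventually contain the sequence, so a subsequence $\{x_{n_k}\}$ lies in the $\tau(\to)$-closed set $F = \cX \setminus U$. Applying the hypothesis to $\{x_{n_k}\}$ yields a further subsequence $x_{n_{k_l}} \lr x_0$; as its terms lie in $F$ and $F$ is $\tau(\to)$-closed, the a priori limit $x_0$ lies in $F = \cX \setminus U$, contradicting $x_0 \in U$. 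Only properties (ii)--(iii) and the definition of $\tau(\to)$-closedness are used here.

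For the reverse implication \emph{a posteriori $\Rightarrow$ subsequence condition} I would argue by contraposition. Since a posteriori convergence is topological it is inherited by subsequences, so it suffices to show: if a subsequence $\{x_{n_k}\}$ admits \emph{no} further subsequence $\lr$-converging to $x_0$, then $\{x_{n_k}\}$ does not converge to $x_0$ a posteriori, whence neither does $\{x_n\}$. Such a subsequence is furnished verbatim by the negation of the subsequence condition, and after deleting finitely many terms I may assume $x_{n_k} \neq x_0$ for every $k$. The target is a $\tau(\to)$-closed set $F$ with $x_0 \notin F$ that still contains infinitely many $x_{n_k}$: then $\cX \setminus F$ is a $\tau(\to)$-open neighbourhood of $x_0$ missing infinitely many terms, which defeats a posteriori convergence.

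Constructing $F$ is the crux, and it is exactly here that the merely \emph{sequential} --- as opposed to Fr\'echet--Urysohn --- nature of $\tau(\to)$ must be handled with care. The bare term set $\{x_{n_k}\}$ is typically not $\tau(\to)$-closed, since an a priori limit of a sub-subsequence need not be a term; yet passing to its full $\tau(\to)$-closure (the transfinite iterate of the one-step sequential-closure operator) may reintroduce $x_0$, because $x_0$ can be reachable through \emph{iterated} a priori limits even when no single subsequence $\lr$-converges to it. The mechanism I would exploit is the hypothesis itself: no sequence $\lr$-converging to $x_0$ is a subsequence of $\{x_{n_k}\}$, so $x_0$ is never produced at the first level of closure. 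The hard part will be to thin $\{x_{n_k}\}$, by a diagonal choice of indices that keeps control at \emph{every} level of the iterated closure, to a sub-subsequence whose complete $\tau(\to)$-closure still omits $x_0$; this thinned sub-subsequence then serves as the generator of the separating set $F$.

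Once such an $F$ is produced, the contradiction described in the second paragraph closes the argument, establishing the nontrivial implication and hence the full equivalence. I expect the closure-control thinning to be the only genuine difficulty; the remaining bookkeeping --- checking that $F$ is $\tau(\to)$-closed, that it retains infinitely many terms of the subsequence, and that $x_0$ stays outside --- is routine once the diagonal construction is in place.
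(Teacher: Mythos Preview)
The paper does not actually prove this theorem: it is stated in the Appendix and attributed to Kantorovich--Vulih--Pinsker \cite{Kant50} and Kisy\'nski \cite{Kis60}, with no argument supplied. So there is no ``paper's own proof'' to compare against; your attempt has to be judged on its own merits.

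Your treatment of the direction \emph{subsequence condition $\Rightarrow$ a posteriori convergence} is correct and standard.

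The reverse direction, however, is not proved. You correctly isolate the obstacle: the one-step sequential closure $[A]_1$ of the term set $A=\{x_{n_k}\}$ need not be $\tau(\to)$-closed, and the full closure $\overline{A}$ may well contain $x_0$ via iterated limits even though no sequence in $A$ converges a priori to $x_0$. But your proposed remedy---``thin $\{x_{n_k}\}$ by a diagonal choice of indices so that the complete closure of the thinned set omits $x_0$''---is not an argument, it is a restatement of the goal. Indeed, the existence of a closed $F$ with $x_0\notin F$ and infinitely many $x_{n_k}\in F$ is \emph{equivalent} to the existence of an infinite $I$ with $x_0\notin\overline{\{x_{n_k}:k\in I\}}$ (each yields the other immediately). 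So the ``thinning'' step is precisely what has to be proved, and you give no mechanism for producing it. The phrase ``diagonal choice of indices that keeps control at every level of the iterated closure'' is suggestive but empty as it stands: the iterated closure may run to ordinal level $\omega_1$, and a countable diagonal procedure does not obviously interact with a transfinite construction.

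In short, you have an outline with a gap exactly at the nontrivial point. To complete the proof you would need either to carry out the thinning construction in full (which is what Kisy\'nski does) or to find a different closed separating set directly. As written, the hard implication is not established.
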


\begin{remark}
The convergence {\em a posteriori} is generated by a topology. Suppose an $\cL$-convergence $\lr$ satisfies additionally 
\begin{description}
\item{(iv)} If every subsequence $(x_{n_k})$ of $(x_n)$
contains a further subsequence $(x_{n_{k_l}})$
$\lr$-convergent to $x_0,$
then the whole sequence $(x_n)$ is $\lr$-convergent to $x_0$.
\end{description}
Then the convergence $\lr$ is called an 
$\cL^*$-convergence. It is an immediate consequence of Theorem \ref{kvpk_recipe} that if we start with an $\cL^*$-convergence then the convergences {\em a posteriori} and {\em a priori} coincide.
\end{remark}

\begin{remark}
It follows that given an $\cL$-convergence ``$\lr$"  we can {\em weaken}  it to an $\cL^*$-convergence
``$\overset*\lr$" which is already the usual  convergence of
sequences in the topological space
$(\cX, \tau(\lr))\equiv(\cX, \tau(\overset*\to))$.
At least two examples of such a procedure are commonly known.
\end{remark}
\begin{example} If ``$\lr$"  denotes the convergence
``almost surely" of real random variables defined on a
probability space $(\Omega, \cF, \GP),$
then ``$\overset*\lr$" is the convergence ``in probability".
\end{example}
\begin{example} Let $\cX=\GR^1$  and take a sequence
$\varepsilon_n\searrow0.$  Say that $x_n\lr x_0,$  if for
each $n\in\GN, |x_n-x_0|<\varepsilon_n,$   i.e. $x_n$  converges
to $x_0$  at the given rate $\{\varepsilon_n\}.$  Then
``$\overset*\lr$"  means the usual convergence of real
numbers.
\end{example}
\begin{remark} It is worth noting that 
a set $J\subset\cX$ is relatively $\lr$-compact (i.e. in each sequence $\{x_n\} \subset J$ one can find a subsequence $\{n'\}$ such that $x_{n'} \lr x_0$, for some $x_0 \in \cX$) iff
it is relatively $\overset*\lr$-compact. 
\end{remark}

\begin{remark}
Let us notice that if $(\cX, \tau)$  is a
Hausdorff topological space, then
\[\tau\subset\tau_s\equiv\tau(\intau)\]  and in general
this inclusion may be strict (like in the case of the
weak topology on an infinite dimensional Hilbert space).
\end{remark}

\end{document}